\newtheorem{theorem}{Theorem}
\newtheorem{question}{Question}
\newtheorem{proposition}{Proposition}
\newtheorem{corollary}{Corollary}
\newtheorem{lemma}{Lemma}
\newtheorem{claim}{Claim}
\theoremstyle{definition}
\newtheorem{definition}{Definition}
\DeclareMathOperator{\Imp}{imp}
\DeclareMathOperator{\gate}{gate}
\DeclareMathOperator{\conv}{conv}
\DeclareMathOperator{\Star}{St}
\DeclareMathOperator{\Face}{F}
\DeclareMathOperator{\Cell}{C}
\newcommand{\M}{\ensuremath{\mathcal{M}}}
\newcommand{\covectors}{\ensuremath{\mathcal{L}}}
\newcommand{\topes}{\ensuremath{\mathcal{T}}}
\DeclareMathOperator{\Sep}{\mathrm Sep}
\newcommand{\rk}{\ensuremath{{\rm rank}}}
\newcommand{\lgate}{{\langle\langle}}
\newcommand{\rgate}{{\rangle\rangle}}
\newcommand{\cM}{\mathcal{M}\xspace}
\newcommand{\ou}{\overline{u}\xspace}
\newcommand{\oz}{\overline{z}\xspace}
\newcommand{\ov}{\overline{v}\xspace}
\newcommand{\ox}{\overline{x}\xspace}
\newcommand{\KK}[1]{\textcolor{olive}{KK: #1}}
\title{Cell structure of mediangle graphs}%Euclidean realizations of wounded\comment{?} convex geometries}
\author{Victor Chepoi$^{1,4}$, Anthony Genevois$^2$, and Kolja Knauer$^3$}
\date{\today}
\begin{document}

\maketitle

	\bigskip
	\centerline{$^{1}$LIS, Aix-Marseille Universit\'e, CNRS, and Universit\'e
	de Toulon}
	\centerline{Facult\'e des Sciences de Luminy, F-13288 Marseille Cedex 9,
	France}
	\centerline{ {\sf victor.chepoi@lis-lab.fr} }

	\bigskip
	\centerline{$^{3}$Institut Math\'ematiques Alexander Grothendieck,}
	\centerline{Universit\'e de Montpellier, France}
    \centerline{ {\sf anthony.genevois@umontpellier.fr} }

	\bigskip
	\centerline{$^{4}$Departament de Matem\`atiques i Inform\`atica,}
	\centerline{Universitat de Barcelona (UB),}
    \centerline{Centre de Recerca Matemàtica, Barcelona, Spain}
    \centerline{ {\sf kolja.knauer@ub.edu} }
    
   \bigskip
    \centerline{$^{4}$ Institut Universitaire de France (IUF)}

\begin{abstract}  Mediangle graphs are a common generalization of median graphs (1-sekeleta of CAT(0) cube complexes) and Coxeter graphs (Cayley graphs of Coxeter systems).
Answering a question motivated from geometric group theory, we show that these graphs can be endowed with the structure of a contractible cell complex. We further show that the cells of this complex are products of simplices and simplicial oriented matroids. A crucial part of the proof identifies bipartite mediangle graphs as tope graphs of finitary Complexes of Oriented Matroids.% (COMs).
% \KK{update }Genevois \cite{Ge_mediangle} introduced and investigated mediangle graphs as a common generalization of median graphs (1-sekeleta of CAT(0) cube complexes) and Coxeter graphs (Cayley graphs of Coxeter systems) and studied groups acting on them. He asked if mediangle graphs can be endowed with the structure of a contractible cell complex. We answer this in the affirmative for bipartite mediangle graphs by proving that they are tope graphs of finitary Complexes of Oriented Matroids (COMs). We also show that the oriented matroids (OMs) constituting the cells of COMs arising from bipartite mediangle graphs are exactly the simplicial OMs. 
\end{abstract}

\tableofcontents

\section{Introduction} 

\subsection{Avant-propos} Genevois \cite{Ge_mediangle,Ge_rotation} introduced mediangle graphs as a common generalization of median graphs (1-sekeleta of CAT(0) cube complexes), Coxeter graphs (Cayley graphs of Coxeter systems), and quasi-median graphs (Hamming analogs of median graphs). He established important and interesting properties of mediangle graphs and groups acting on them and presented other examples of mediangle graphs, arising in geometric group theory and in metric graph theory. He exhibited the $2$-dimensional cell structure of mediangle graphs, induced by their convex cycles and asked whether, analogously to median and Coxeter graphs, mediangle graphs could be endowed with a (high-dimensional) cell structure  of a contractible cell complex. In this paper, we answer  this question in the affirmative by proving that the cell complex $X(G)$, having the gated-antipodal  subgraphs of a mediangle graph $G$ as cells, is contractible.  As a main ingredient we prove that bipartite mediangle graphs are tope graphs of Complexes of Oriented Matroids (COMs), thus  providing a novel connection between objects in Geometric Group Theory and the theory  of Oriented Matroids.  The cells of a COM   are Oriented Matroids (OMs). We show that the cells of the COM arising from a bipartite mediangle graph are the simplicial OMs, a class that has been in the focus of researchers due to its lattice properties~\cite{BjEdZi} and relation to simplicial central hyperplane arrangements~\cite{De}. Finally, we prove that the cells in general mediangle graphs are products of simplices (cliques) and simplicial OMs.  %The paper and our proofs are short, but not necessarily
%elementary, since they reside on deep results from the
%theory of COMs and OMs.

\emph{Mediangle graphs} are defined by two global metric conditions and two local conditions. %The first two are the Triangle Condition and the Cycle Condition (which generalizes the Quadrangle Condition, used in the definition of weakly modular graphs). 
Recall that the \emph{interval} $[u,v]$ between two vertices $u,v$ of a connected graph consists of all vertices $w$ on $(u,v)$-shortest paths. The \emph{Cycle Condition} requires that  for each pair of vertices $z,u$ and any pair of neighbors $x,y\in [z,u]$ of $z$, there exists a convex cycle $C\subseteq [z,u]$  containing $z,x,y$ and such that the vertex $\oz$ opposite to $z$ in $C$ belongs to $[x,u]\cap [y,u]$. Denoting by $d$ the graph distance, the \emph{Triangle Condition} requires that for each triplet $x,y,u$ such that $d(x,y)=1, d(x,u)=d(x,v)=k$, there exists $w$ adjacent to $x$ and $y$ such that $w\in [x,u]\cap [y,u]$.
The first local condition ensures that the convex cycles define a polygonal complex in which the intersection of any two even convex cycles is either empty, a vertex, or an edge. We call it the \emph{Intersection of Even Cycles Condition}. Finally, the \emph{Diamond Condition} requires the graph to be diamond-free (a diamond is a $K_4$ minus an edge). %A \emph{bipartite mediangle graph} is a mediangle graph which is also bipartite. 

One important class of bipartite medianlge graphs are  \emph{median graphs}, which can be defined as the mediangle graphs in which the cycle $C$ in the Cycle Condition is always a 4-cycle. Alternatively, median graphs are defined by the existence of a unique median vertex of triplets. They  have a rich structure and their cube complexes  are contractible. Moreover, by the results of  \cite{Ch_CAT,MR1663779,Ro},  median graphs are the 1-sekeleta of CAT(0) cube complexes. Due to this bijection, they are crucial in the study of groups acting geometrically on CAT(0) cube complexes in geometric group theory. Other examples include bipartite cellular graphs \cite{BaCh_cellular}, hypercellular graphs \cite{CKM20},  and $1$-skeleta of some small cancellation polygonal complexes (see \cite{Ge_mediangle} for details).  Another important class of bipartite mediangle graphs are~\emph{Coxeter graphs}, i.e., Cayley graphs of Coxeter 
systems. Also for them a %high-dimensional 
cell structure is inherent:  finite Coxeter subgroups naturally define a higher-dimensional cell structure on Coxeter graphs, which can be endowed with a CAT(0) geometry \cite[Chapter 12]{Davis}.  Quasi-median graphs \cite{BaMuWi} are the graphs in which uniqueness of medians is replaced by uniqueness of quasi-medians, which are products of vertices and triangles. While median graphs isometrically embed into hypercubes, quasi-median graphs are not bipartite but isometrically embed into Hamming graphs.   Similarly to previous examples, quasi-median graphs lead to contractible cell complexes  \cite{QM}. 

\subsection{Main results} 
The above leads naturally to the  question~\cite[Question 7.1]{Ge_mediangle} whether mediangle graphs can be endowed with a structure of a contractible cell complex.  In this article, we solve this problem. More precisely, we prove the following result:

\begin{theorem} {\sf (Mediangle graphs have contractible cell complexes)}\label{thm:GeneralCase}
A mediangle graph $G$ is the $1$-skeleton of a contractible cell complex $X(G)$. Moreover, the $1$-skeleta of cells in $X(G)$ coincide with the gated-antipodal subgraphs of $G$. 
\end{theorem}

Notice that, from the perspective of geometric group theory, an immediate application of Theorem~\ref{thm:GeneralCase} is:

\begin{corollary}
A group $\Gamma$ acting properly and cocompactly on a mediangle graph is of type $\mathcal{F}_\infty$. If $\Gamma$ is moreover torsion-free, then it is of type $\mathcal{F}$. 
\end{corollary}

Recall that, given an integer $n\geq 1$, a group is \emph{of type $\mathcal{F}_n$} if it admits a classifying space with finitely many cells of dimension $\leq n$. A group is of type $\mathcal{F}_1$ (respectively, $\mathcal{F}_2$) if and only if it is finitely generated (respectively, finitely presented). A group is \emph{of type $\mathcal{F}_\infty$} if it is of type $\mathcal{F}_n$ for every $n \geq 1$. This amounts to saying that the group admits a classifying space with finitely many cells in each dimension (but possibly infinitely many in total). Finally, a group is \emph{of type $\mathcal{F}$} if it admits a classifying space with finitely many cells. Every group acting properly and cocompactly on a contractible cell complex is automatically of type $\mathcal{F}_\infty$. But it may not be of type $\mathcal{F}$. For instance, (non-trivial) finite groups are of type $\mathcal{F}_\infty$ but never of type $\mathcal{F}$. But a torsion-free group acting properly and cocompactly on a contractible cell complex is automatically of type $\mathcal{F}$. 

As we noted above,  most of the major classes of known examples of mediangle graphs are bipartite.  The bipartite case is also an important step in the proof of Theorem \ref{thm:GeneralCase}.  In the bipartite case, the cell complex $X(G)$ has a precise combinatorial and geometric interpretation: 

%Our first main result in a positive answer to the first part of this question in the bipartite case:

\begin{theorem} {\sf (Bipartite mediangle graphs are COMs)}
\label{t:mediangle->COM} Any bipartite mediangle graph $G$ is the tope graph of a finitary Complex of Oriented Matroids and thus can be endowed with the structure of contractible cell complex.
%If $G$ is a bipartite mediangle graph, then $X(G)$ is a finitary Complex of Oriented Matroids.  %and thus can be endowed with the structure of contractible cell complex.
\end{theorem}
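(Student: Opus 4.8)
The plan is to deduce Theorem~\ref{t:mediangle->COM} from the known combinatorial characterization of tope graphs of COMs rather than by exhibiting the system of covectors by hand. Recall that a graph is the tope graph of a finitary COM precisely when it is a partial cube all of whose \emph{antipodal subgraphs} (the convex subgraphs that are themselves antipodal partial cubes) are gated; in that case each such antipodal subgraph is automatically the tope graph of an OM and thus serves as a cell. Granting this, it suffices to verify two things for an arbitrary bipartite mediangle graph $G$: first, that $G$ is a partial cube; and second, that every antipodal subgraph of $G$ is gated. The contractibility asserted in the theorem then comes for free, since the cell complex of any finitary COM is contractible \cite{BaChKn}; finitariness itself will follow from local finiteness of $G$ together with the fact that its convex cycles, and hence its cells, are finite.

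For the first step I would show that $G$ embeds isometrically into a (possibly infinite) hypercube, i.e.\ that the Djokovi\'c--Winkler relation $\Theta$ on the edges of $G$ is transitive. As $G$ is bipartite it has no odd cycles, so the only obstructions are a non-convex $\Theta$-class or a failure of transitivity of $\Theta$, and the defining mediangle property is exactly what should rule these out: given two neighbors $x,y$ of $z$ lying on the $u$-side, the convex cycle $C$ furnished by the axiom, whose opposite vertex $\oz$ lies on geodesics from $x$ and from $y$ to $u$, pins down the local geodesic structure around each vertex, while the intersection condition on convex cycles guarantees that edge classes propagate coherently. Since median graphs and Coxeter graphs --- the two guiding special cases --- are already partial cubes and the mediangle axiom is precisely their common generalization, I expect this verification to be a matter of organizing the convex-cycle bookkeeping rather than introducing a genuinely new idea.

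The heart of the argument, and the step I expect to be the main obstacle, is showing that every antipodal subgraph $S$ of $G$ is gated. Fixing $S$ and a vertex $u \notin S$, I must produce a gate, i.e.\ a vertex $g \in S$ lying on a shortest $u$--$s$ path for every $s \in S$. The idea is to run a descent driven by the mediangle axiom: whenever $u$ has two neighbors strictly closer to some vertex of $S$, the opposite vertex of the associated convex cycle gives a canonical way to move toward $S$ without increasing distances, and the antipodality of $S$ is what keeps this process coherent across all of $S$ at once. Making this precise requires first understanding how convex cycles meet antipodal subgraphs --- in particular establishing that an antipodal subgraph is closed under passing to the opposite vertex of any convex cycle it contains --- and it is here that the full strength of the mediangle condition must be extracted. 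Once gatedness is secured, the characterization yields that $G$ is the tope graph of a finitary COM, and identifying these gated antipodal cells as exactly the (simplicial) OMs, carried out in the companion result, completes the geometric picture.
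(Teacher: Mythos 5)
Your overall strategy coincides with the paper's: invoke the Knauer--Marc characterization (a partial cube is the tope graph of a COM if and only if all its antipodal subgraphs are gated), note that the partial-cube property is already available (the paper simply cites Genevois, so your plan to reprove it via $\Theta$-class bookkeeping is unnecessary), and then establish gatedness of antipodal subgraphs. But the central step --- gatedness --- is precisely where your proposal stops being a proof. Your ``descent driven by the mediangle axiom'' is a plan, not an argument: you never explain how the opposite vertex of a convex cycle produces a \emph{single} vertex $g\in S$ that lies on geodesics to \emph{all} of $S$ simultaneously, nor why the process terminates or is independent of the choices made along the way; and your auxiliary claim that an antipodal subgraph is closed under passing to opposite vertices of convex cycles is neither proved nor, as it turns out, the right lever. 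The paper's missing idea is an intermediate property: \emph{apiculateness}, i.e., that for every basepoint $u$ the interval order $\preceq_u$ is a meet-semilattice (equivalently, the $u$-apex relative to any $x,y$ is unique). Mediangle $\Rightarrow$ apiculate is proved by a delicate induction on $d(u,x)+d(u,y)$ that uses two facts your sketch does not isolate: convex cycles in bipartite mediangle graphs are \emph{gated} (Genevois, Theorem 3.5), and the intersection axiom forces the two gated cycles through $u,x',y'$ inside $[u,x]$ and $[u,y]$ to coincide (a proper subpath of length $\ge 2$ of a gated cycle is never gated). Once apiculateness is in hand, gatedness of an antipodal subgraph $C$ is short and global, with no descent at all: if $a\neq b$ both lie in $\Imp(u,C)\cap[u,x]$ for some $x\in C$, take the antipode $\ox$ of $x$ in $C$, let $c$ be the $x$-apex of $u$ and $\ox$; then $a,b\in[x,c]$, $c\in[u,a]\cap[u,b]$, and $c\in C$ by convexity of $C=[x,\ox]$, contradicting that $a,b$ are imprints. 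Without apiculateness (or an equivalent uniqueness statement), your descent has no mechanism to rule out two distinct candidate gates, which is exactly the failure mode the paper's induction is built to exclude.

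A secondary gap: your justification of finitariness via ``local finiteness of $G$'' rests on a false premise --- bipartite mediangle graphs need not be locally finite (an infinite star is a tree, hence median, hence mediangle), so finitariness of the resulting system of sign vectors has to be argued from the structure of the embedding into $Q(U)$ (finite positive supports of topes, finite zero sets of covectors), not from bounded vertex degrees. The paper handles contractibility of the infinite case by exhausting the finitary COM by finite restrictions and applying Whitehead's theorem; this is the step your appeal to \cite{BaChKn} implicitly needs but does not supply.
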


Finite Complexes of Oriented Matroids (COMs) were introduced by Bandelt, Chepoi, and Knauer~\cite{BaChKn}. The cells of these complexes are Oriented Matroids (OMs) and those complexes are contractible cell complexes~\cite{BaChKn}. Oriented Matroids are combinatorial objects co-invented by Bland and Las Vergnas \cite{BlLV} and Folkman and Lawrence \cite{FoLa}, and further investigated by many  authors (see the books \cite{BjLVStWhZi,and25} or the recent survey~\cite{ziegler2024oriented}). OMs capture combinatorics of sign vectors representing the regions in a central 
hyperplane arrangement in ${\mathbb R}^d$, but are more general than this \emph{realizable} case. 
A useful tool for studying  hyperplane arrangements is the incidence graphs of their maximal cells (which in case of central arrangements correspond to $1$-sekeleta of zonotopes, see~\cite{Zi}). The analogous construction for 
OMs are tope graphs, which faithfully represent the whole object. They were introduced by Edmonds and Mandel 
\cite{ed-ma-82}, but already played an important role in Tits' \cite{Ti}
combinatorial study of reflection arrangements (Coxeter complexes). Further, tope graphs were used by Deligne \cite{De} 
in the study of simplicial arrangements, later investigated by  Bj\"orner, Edelmann, and Ziegler~\cite{BjEdZi}, whose OM generalization is crucial in our study. Deligne proved that the distance 
between two vertices in the  tope graph of a hyperplane arrangement is equal to the number of hyperplanes of the arrangement  separating the corresponding regions. This result corresponds to the more general fact that the tope graph of an OM can be isometrically embedded into a hypercube, i.e., such a graph is a \emph{partial cube}, see \cite{BjLVStWhZi}. 
%\KK{the part until the Theoremnis very long}
%As mentioned above, the cells of COMs are OMs. Hence, OMs are the COMs with a single cell. %The resulting cell complex is contractible. 
%In the realizable setting,  a COM corresponds to the intersection pattern of a hyperplane arrangement with an open convex set. %Another important instance of COMs are \emph{ample sets}, i.e., the COMs
%with cubical cells. Ample sets \cite{BaChDrKo,Dr} (originally introduced as lopsided sets by Lawrence \cite{La}) capture an important variety of combinatorial objects, and in particular median graphs. 

Analogously to OMs, the tope graphs of COMs are  partial cubes and   determine the full complex up to isomorphism~\cite{BaChKn}. 
Knauer and Marc \cite{KnMa} characterized the tope graphs of COMs as those partial cubes all whose antipodal subgraphs are gated. The 1-skeleta of cells of COMs are the antipodal 
subgraphs and OMs are exactly those COMs whose tope graph is antipodal.

Our Theorem~\ref{thm:GeneralCase} characterizes the 1-skeleta of cells of mediangle graphs $G$ as the gated-antipodal subgraphs of $G$. On the other hand, by the results of \cite{BaChKn,KnMa}, the 1-skeleta of cells of COMs are the gated-antipodal subgraphs of their tope graphs and they are OMs. Thus, it is natural to ask 
for a more precise description of the cells that appear in cell complexes of mediangle graphs. In other words, we need to describe more carefully the structure of gated-antipodal mediangle graphs. %This amounts to answering this question for bipartite mediangle graphs.
While the cells in COMs are as general as OMs, we show that the cells of the COMs arising from biparite mediangle graphs are much more specific. The following result fully describes the structure of cells in the cell complexes of mediangle graphs.

\begin{theorem}  {\sf (Cells of mediangle graphs)}\label{thm:MediangleGatedAntipod}
A mediangle graph $G$ is gated-antipodal (i.e., $G$ is the $1$-skeleton of a cell of the cell complex of a mediangle graph) if and only if $G$ is the Cartesian product of finitely many complete graphs and a graph $H$ satisfying any of the following equivalent conditions:
    \begin{itemize}
        \item[(i)] $H$ is an antipodal, bipartite mediangle graph;
        \item[(ii)] $H$ is an antipodal, apiculate  partial cube;
        \item[(iii)] $H$ is the tope graph of a simplical OM.
    \end{itemize}
\end{theorem}

The equivalence (ii)$\Leftrightarrow$(iii) in Theorem \ref{thm:MediangleGatedAntipod} was conjectured by the third author in \cite[Conjecture 10]{Knauer_HDR} and creates a link to order theory. Namely, for a graph $G=(V,E)$ and a basepoint $u\in V$, we denote by $G_{\preceq_u}$ the poset on $V$ where $x\preceq_u y$ if and only if 
$x\in [u,y]$. % (the interval $[u,v]$ between two vertices $u,v$ consists of all vertices $x$ such that $d(u,v)=d(u,x)+d(x,v)$). 
A graph $G$ is \emph{apiculate} \cite{BaCh_wma1} if $G_{\preceq_u}$ is a meet semilattice for any $u$. The additional assumption of antipodality corresponds to $G_{\preceq_u}$ being a lattice for every choice of basepoint $u$. %Finally recall that a \emph{partial cube} is a graph admitting an isometric embedding into a hypercube (bipartite mediangle graphs are partial cubes \cite{Ge_mediangle}). 
\emph{Simplicial oriented matroids} are generalizations of simplicial hyperplane arrangements \cite{BjEdZi,BjLVStWhZi,De}, i.e., arrangements of hyperplanes in a real vector space ${\mathbb R}^d$ in which all regions are simplicial cones. By \cite[Proposition 2.3.6]{BjEdZi}, finite reflection arrangements are simplicial (see Section~\ref{section:Matroids} for more information on oriented matroids).  

A second stronger part of~\cite[Question 7.1]{Ge_mediangle} is whether mediangle graphs admit a CAT(0) metric, i.e., if the associated cell complex is CAT(0). However,
not all simplicial OMs are realizable by a Euclidean hyperplane arrangement, i.e., their tope graphs are not graphs of zonotopes. Hence by Theorem \ref{thm:MediangleGatedAntipod} it is not clear how to make sense of this part of the question. This and other perspectives will be addressed in Section~\ref{sec:fin}.

\subsection{A few words about the proofs}  
We conclude this introductory section with a  few words about the proofs of our results. The first step towards the proof of Theorem~\ref{thm:GeneralCase} is to understand the bipartite case (Theorem~\ref{t:mediangle->COM}). We prove Theorem~\ref{t:mediangle->COM} in two steps: first we prove that any bipartite mediangle graph $G$ is apiculate (Lemma \ref{mediangle->apiculate}) and then we prove that any apiculate partial cube $G$ is a tope graph of a COM (Lemma \ref{l:apiculate->COM}). While the theory of OMs and COMs was developed in the finite case, the mediangle graphs considered in our results may be infinite. Therefore, to prove the contractibility of their complexes (via Whitehead's theorem), we have to extend the theory of finite COMs to finitary COMs (which we introduce for this purpose in this paper and which extend finitary affine oriented matroids of \cite{DK25}).  The proof of Theorem \ref{thm:MediangleGatedAntipod} in the bipartite case  uses Lemma \ref{mediangle->apiculate} and   some deep results from the theory of COMs and OMs.

The second step towards the proofs of Theorems~\ref{thm:GeneralCase} and \ref{thm:MediangleGatedAntipod} is to understand how the global structure of a non-bipartite mediangle graph can be reduced to some of its  bipartite mediangle subgraphs.  It is well-known that the geometry of a median graph is essentially encoded into the combinatorics of its \emph{hyperplanes} (or \emph{$\Theta$-classes}). Similarly, hyperplanes can be defined in mediangle graphs, providing a very useful tool in the study of the geometry of mediangle graphs. Contrary to median graphs, hyperplanes in mediangle graphs can cross in different ways. More precisely, any pair of transverse hyperplanes can be assigned an \emph{angle}.  In median graphs, the angle between two transverse hyperplanes is $\pi/2$. This motivates the introduction of a specific family of hyperplanes, dubbed \emph{right hyperplanes}. Formally, a hyperplane $J$ is \emph{right} if $\measuredangle (J,H)=\pi/2$ for every hyperplane $H$ transverse to $J$. The idea to keep in mind is that, in a mediangle graph, a right hyperplane should behave ``like a hyperplane in a median graph''. In this perspective, our main result is that sectors delimited by right hyperplanes are always gated (Proposition~\ref{prop:RightHypGated}).  It turns out that a mediangle graph that is not bipartite always contains a right hyperplane. Then, such a hyperplane can be used  to decompose the mediangle graph as a gated amalgamation  (Proposition~\ref{prop:RightHypGated}). In a nutshell, such decompositions allow us to reduce the proofs of Theorem~\ref{thm:GeneralCase} and of the first part of Theorem \ref{thm:MediangleGatedAntipod} to the bipartite case. 

% \subsection{The structure of the article}  

\section{Preliminaries}

In this section, we record basic definitions and properties first regarding abstract graphs, in Section~\ref{section:Graphs}, and then regarding mediangle graphs, in Section~\ref{section:Mediangle}.

\subsection{Graphs}\label{section:Graphs}

All graphs $G=(V,E)$ occurring in this paper are simple, connected, without loops or multiple edges, but not necessarily finite. We will write $u\sim v$ if two vertices $u$ and $v$ are adjacent and $u\nsim v$ if $u$ and $v$ are not adjacent. 
By a \emph{clique} of $G$ we will mean an inclusion maximal complete subgraph $K$ of $G$. The {\it distance} $d(u,v):=d_G(u,v)$ between two vertices $u$ and $v$ is the length
of a shortest $(u,v)$-path, and the {\it interval} $[u,v]$
between $u$ and $v$ consists of all vertices on shortest
$(u,v)$--paths:
$$[u,v]:=\{ x\in V: d(u,x)+d(x,v)=d(u,v)\}.$$
We will use in many places, without refereeing,  the following standard lemma about intervals in graphs: 

\begin{lemma}\label{l:between} 
If $x\in [u,v]$ and $y\in [x,v]$, then $y\in [u,v]$ and $x\in [u,y]$. \qed
\end{lemma}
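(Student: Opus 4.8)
The plan is to derive everything from the defining equation of intervals, namely that $w\in[a,b]$ if and only if $d(a,w)+d(w,b)=d(a,b)$, together with the triangle inequality. Writing out the two hypotheses explicitly, $x\in[u,v]$ gives $d(u,x)+d(x,v)=d(u,v)$, and $y\in[x,v]$ gives $d(x,y)+d(y,v)=d(x,v)$. Both conclusions are then pure distance computations.

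First I would establish $y\in[u,v]$. By the triangle inequality $d(u,y)\le d(u,x)+d(x,y)$, so adding $d(y,v)$ to both sides and substituting the second hypothesis $d(x,y)+d(y,v)=d(x,v)$ yields $d(u,y)+d(y,v)\le d(u,x)+d(x,v)$, which equals $d(u,v)$ by the first hypothesis. Since the reverse inequality $d(u,v)\le d(u,y)+d(y,v)$ is again just the triangle inequality, we obtain equality and hence $y\in[u,v]$.

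For the second claim $x\in[u,y]$ I would chain the three equalities already in hand. From $y\in[u,v]$ we have $d(u,y)=d(u,v)-d(y,v)$; substituting $d(u,v)=d(u,x)+d(x,v)$ and then $d(x,v)=d(x,y)+d(y,v)$ gives $d(u,y)=d(u,x)+d(x,y)$, which is precisely the statement that $x\in[u,y]$.

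The whole argument is a matter of combining the triangle inequality with the two interval equalities, so there is no genuine obstacle beyond performing the substitutions in the right order. The one point to respect is that the first part (proving $y\in[u,v]$) must come before the second, since the clean derivation of $x\in[u,y]$ reuses the equality $d(u,y)+d(y,v)=d(u,v)$ established there.
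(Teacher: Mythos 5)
Your proof is correct: both parts follow exactly as you compute, and the paper itself states this as a standard fact without proof, so your argument is precisely the expected one (triangle inequality plus substitution of the two interval equalities). Note only that the second part does not actually need the first — $d(u,x)+d(x,y)\le d(u,x)+d(x,v)-d(y,v)=d(u,v)-d(y,v)\le d(u,y)$ gives it directly — but your ordering is equally valid.
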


%\paragraph{Convexity and gatedness.} 
An induced subgraph $H$ of $G$ (or the corresponding vertex set $A$)
is called {\it convex} if it includes the interval of $G$ between
any two of its vertices. A \emph{halfspace} is a convex set $A$ with convex complement $V\setminus A$. An induced subgraph $H$ of $G$ is {\it
isometric} if the distance between any pair of vertices in $H$ is
the same as that in $G$.  In particular, convex subgraphs are
isometric.  The \emph{imprint} of a vertex $u$ in a set $A\subseteq V$ is the set $\Imp_A(u)=\{ x\in A: [u,x]\cap A=\{ x\}\}$. 
If $|\Imp_A(u)|=1$ for any $u\in V\setminus A$, then the set $A$ (or the subgraph $G[A]$) is called \emph{gated}~\cite{DrSch} and the unique vertex  of $\Imp_A(u)$ is called the \emph{gate} of $u$ in $A$ and is denoted by  $\gate_A(u)$. Equivalently, $A$ is gated if for any $u\in V\setminus A$ there exists a unique vertex $\gate_A(u)\in A$ such that $\gate_A(u)\in [u,v]$ for any $v\in A$. Gated sets are convex, but the converse does not hold (e.g.\ edges in complete graphs). Nevertheless, gated sets are closed by taking intersections, therefore, for each set 
$A\subseteq V$ there exists the smallest gated set $\lgate A\rgate$ containing $A$ and it is called the \emph{gated hull} of $A$.  \iffalse The \emph{gated-interval} $[[a,b]]$ between two vertices $a,b$ of a graph $G$ is the set 
$$\{x \in V(G): [a,x] \cap [x,b] = \{x\} \}.$$
Notice that  $[[a,b]]\subseteq \lgate a,b\rgate$ but the converse inclusion is not true.   Still gatedness of a subgraph can be characterized in terms of gated-intervals (the result is well-known and easily follows from the definition): % for future use (in Section~\ref{section:GatedAnti}). 

\begin{lemma}\label{lem:GatedInter}
A subgraph $H$ of a graph $G$ is gated if and only if $[[a,b]] \subseteq V(H)$ for all $a,b \in V(H)$. \qed
\end{lemma}
\fi 

%\paragraph{Antipodality.} 
An \emph{antipode} of a vertex $v$ in an induced  subgraph $H=G[A]$ is a (necessarily unique) vertex $\ov$ such that $A=[v, \ov]$, where $[v,\ov]$ is considered in $G$. A subgraph $H\subseteq G$ is \emph{antipodal} if all its vertices have antipodes. For mediangle graphs we will introduce the notion of \emph{gated-antipodal} subgraphs, see \Cref{def:gatan}.

% \iffalse Similarly, a gated subgraph $H=G[A]$ is \emph{gated-antipodal} if, for every vertex $v \in A$, there exists a vertex $\overline{\ov} \in A$ such that $A =[[v,\overline{\ov}]]$. %, where $[[v,\overline{\ov}]]$ is considered in $G$. 
% We emphasize that, contrary to antipodal subgraphs, the vertex $\overline{\ov}$ here is usually not unique (e.g.\ in a clique). Notice also that if $A =[[v,\overline{\ov}]]$, then $\lgate v,overline{\ov}\rgate=[[v,\overline{\ov}]]=A$. \KK{update to $\overline{\ov}$ globally}
% \fi

% \iffalse
% \KK{remove?} It is worth mentioning that antipodal and gated-antipodal subgraphs are illustrations of a common concept. Namely, given a convexity space $(S,\mathcal{C})$ \cite{MR1234493}, one can define a subset $R \subset S$ as \emph{$\mathcal{C}$-antipodal} if, for every $a \in R$, there exists some $b \in R$, referred to as an \emph{opposite point}, such that $R$ coincides with the $\mathcal{C}$-convex hull of $\{a,b\}$. Then, (vertex-sets of) antipodal and gated-antipodal subgraphs coincide with $\mathcal{C}$-antipodal subsets with respect to either the geodesic-convexity or the gated-convexity. 
% \fi

%\paragraph{Partial cubes and partial Hamming graphs.} 
A graph $G=(V,E)$ is
{\it isometrically embeddable} into a graph $H=(V',E')$ if there
exists a mapping $\varphi : V\rightarrow V'$ such that $d_H(\varphi
(u),\varphi (v))=d_G(u,v)$ for all vertices $u,v\in V$,
 i.e., $\varphi(G)$ is  an isometric subgraph of $H$. Given a (possibly infinite) set $U$, the \emph{hypercube} $Q(U)$ has the finite subsets of $U$ as the vertex set and two vertices are adjacent if they differ in exactly one element. The distance between two vertices $A$ and $B$ of $Q(U)$ is the size of the symmetric difference $A\Delta B=(A\setminus B)\cup (B\setminus A)$. 
 A \emph{face} of $Q(U)$ is the subgraph $Q$  of $Q(U)$ induced by all sets $C$ such that $A\subseteq C\subseteq B$ for two finite subsets $A\subseteq B\subset U$. Then the face is a $k$-dimensional hypercube, where $k$ is the size of $B\setminus A$. 
 A hypercube can also be defined as the weak Cartesian product of $K_2$. More generally, a \emph{Hamming graph} is a weak Cartesian product $\Pi_{i\in U} K_{a_i}$ of complete graphs $K_{a_i}, i\in U$. Analogously to hypercubes, the distance between two vertices $x,y$ of a Hamming graph is the Hamming distance between the tuples corresponding to the vertices $x,y$. A graph $G$ is called a {\it partial cube} (respectively, a \emph{partial Hamming graph}) if it admits an isometric embedding into some hypercube (respectively, into some Hamming graph). 
From now on, we will always suppose that a partial cube or partial Hamming graph $G=(V,E)$ is an isometric subgraph of the hypercube $Q(U)$ (i.e., we will identify $G$ with its image under the isometric embedding). Although we do not use it, notice that partial cubes have been nicely characterized by Djokovi\'{c}~\cite{Dj} in terms of convexity. Djokovi\'{c}'s characterization was extended to all partial Hamming graphs in \cite{Ch_Hamming}; see also  \cite{Wi} for other characterizations of partial Hamming graphs. 

\iffalse
For an edge $uv$ of
$G$, let $W(u,v)=\{ x\in V: d(x,u)<d(x,v)\}$. 

%THEOREM NOT USED IN THE SEQUEL
\KK{mention but remove Djokovic Winkler and Chepoi}
\begin{theorem}[\hspace{-.5pt}\cite{Dj}] \label{Djokovic}
	A graph $G$ is a partial cube if and only if $G$ is bipartite and for any
	edge $uv$ the sets $W(u,v)$ and $W(v,u)$ are complementary halfspaces.
\end{theorem}

Djokovi\'{c}~\cite{Dj} introduced the following binary relation $\Theta$  on
the edges of $G$:  for two edges $e=uv$ and $e'=u'v'$, we set $e\Theta e'$ if
$u'\in W(u,v)$ and $v'\in W(v,u)$. If $G$ is a partial cube, then
 $\Theta$ is an equivalence relation, and for an edge $e$ let $E_e$ be the $\Theta$-class containing $e$.
\fi

%\paragraph{Apiculate graphs.} 
For a triple of vertices $u,x,y$ of a graph $G$, a $u$-{\it apex}
relative to $x$ and $y$ is a vertex $u'\in
[u,x]\cap [u,y]$ such that $[u,u']$ is maximal with respect to inclusion. A graph $G$ is
{\it apiculate}~\cite{BaCh_wma1} if for any vertex $u$ and any pair of vertices $x,y$ of $G$, the $u$-apex relative to $x$ and $y$ is unique. Equivalently, $G$ is apiculate if and only if  for any vertex $u$ the vertex set of $G$
is a meet semilattice with respect to the \emph{base-point order}
$\preceq_u$ defined by setting $v\preceq_u v'$ if and only if $v\in [u,v']$. 
The meet of $x,y$ in this semilattice is exactly the $u$-apex. We denote the semilattice  by  $G_{\preceq_u}$. For a set $A\subseteq V$ we denote by $A_{G_{\preceq_u}}$ the restriction of $\preceq_u$ to $A$.

\subsection{Mediangle graphs}\label{section:Mediangle}

In this subsection we recall  the formal definition of mediangle graphs and their basic properties from \cite{Ge_mediangle}.

\begin{definition}[Mediangle graphs~\cite{Ge_mediangle}]  A graph $G=(V,E)$ is \emph{mediangle} if it satisfies the following conditions:
\begin{description}
	\item[(Triangle Condition)] For all vertices $u,x,y\in V$ satisfying $d(u,x)=d(u,y)$ and $d(x,y)=1$, there exists a common neighbor $w$ of $x$ and $y$ that belongs to $[u,x] \cap [u,y]$.
    	\item[(Cycle Condition)] For all vertices $u,x,y,z\in V$ satisfying 
        $d(u,x)=d(u,y)=d(u,z)-1$ and $d(x,z)=d(y,z)=1$, there exists an even 
        convex cycle $C$ that contains the edges $zx,zy$ and the vertex opposite to $z$ in $C$ belongs to $[u,x]\cap [u, y]$.
	\item[(Intersection of Even Cycles Condition)] The intersection of any two even convex  cycles contains at most one edge.
    \item[(Diamond Condition)] $G$ does not contain an induced subgraph isomorphic to the diamond $K_4^-$ (i.e.\ the complete graph $K_4$ minus an edge, or equivalently two triangles glued along an edge). 
\end{description} 
\end{definition}

\begin{center}
\begin{figure}
    \centering
    \includegraphics[width=0.6\linewidth]{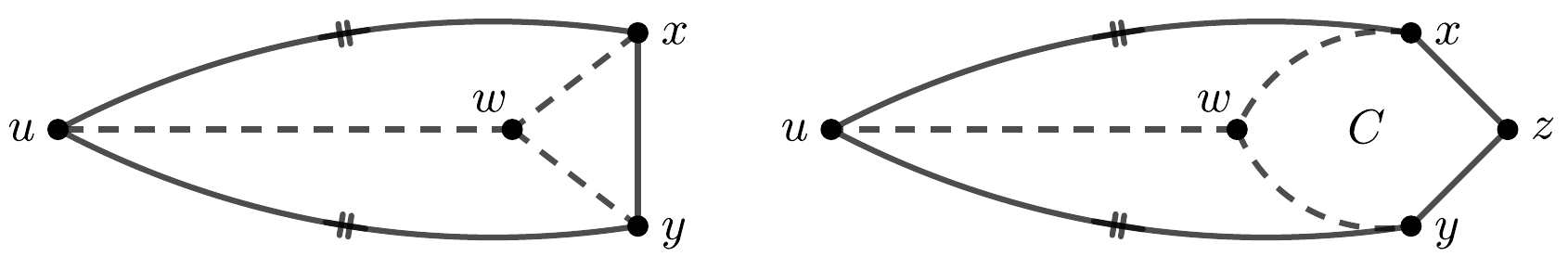}
\caption{Triangle and Cycle Conditions} 
    \label{fig:placeholder}
\end{figure}
\end{center}

% It is worth mentioning that the first two conditions\KK{maybe change order to: Cycle, Triangle, Intersection, Diamdond}, namely (Triangle Condition) and (Diamond Condition), amount to saying that $G$ is \emph{clique-gated}, i.e.\ all the cliques (maximal complete subgraphs) of $G$ are gated; see e.g. \cite{MR1420527} for details. 

% Basic examples of bipartite mediangle graphs are median graphs and Coxeter graphs \cite{Ge_mediangle}. 
% A graph $G$ is \emph{median} if any triplet $u,v,w$ of vertices of $G$ admits a unique \emph{median}, namely a vertex  $m$ belonging to $[u,v]\cap [v,w]\cap [w,u]$. 
% Given a graph $\Gamma$  and a labelling $\lambda: E(\Gamma)\rightarrow {\mathbb N}_{\ge 3}$ 
% of its edges, the \emph{Coxeter group}  is defined by the presentation
% $$\langle u\in V(\Gamma) \mid u^2 = 1 ~(u\in V (\Gamma)), (uv)^{\lambda (u,v)} = 1 ~(uv\in E(\Gamma)) \rangle.$$
% Its Cayley graph, constructed with respect to its generating set $V(\Gamma)$, is referred to as
% a \emph{Coxeter graph}. A particular case of interest  is given by right-angled Coxeter groups.  
% Namely, right-angled Coxeter graphs are median, and, conversely, every
% median graph embeds as a convex subgraph in a right-angled Coxeter graph \cite{HaWi1}. 

%\paragraph{Convex-cycles.} 
It is clear from the definition of mediangle graphs that even convex cycles play a central role. An important property is that in almost all cases they are  gated:

\begin{theorem}[\hspace{-.5pt}{\cite[Theorem 3.5]{Ge_mediangle}}] \label{convex-cycles->gated}  
In a mediangle graph $G$, convex cycles of even length $>4$ are gated and cycles of length 4 are convex. Consequently, if $G$ is bipartite, each convex cycle is gated. 
\end{theorem}  

In the bipartite case, mediangle graphs are partial cubes, (see Theorem~\ref{thm:BigHyp}(4) below), 
and in partial cubes all cycles of length 4 are gated. Hence, the second part of the statement follows from the first part. Notice that in an arbitrary mediangle graph, convex cycles of length $4$ may not be gated (e.g.\ in $K_2 \square K_3$).

%\paragraph{Hyperplanes.} 
In a mediangle graph $G$, a \emph{hyperplane} is an equivalence class of edges with respect to the reflexive-transitive closure of the relation that identifies two edges whenever they belong to a common clique or are opposite edges in an even convex cycle. As illustrated  by the next statement, the geometry of a mediangle graph is essentially encoded into the combinatorics of its hyperplanes.

\begin{theorem}[{\hspace{1sp}\cite[Theorem~3.9 and Theorem~1.5]{Ge_mediangle}}]\label{thm:BigHyp}
Let $G$ be a mediangle graph.
\begin{enumerate}
     
	\item A hyperplane $J$ separates $G$, i.e.\ the graph $G\backslash J$ %\KK{standard would be single $\setminus$} 
    obtained from $G$ by removing all the edges from $J$ contains at least two connected components, called the \emph{sectors} delimited by $J$.
	\item Sectors delimited by hyperplanes are convex.
	\item A path is a geodesic if and only if it crosses each hyperplane at most once. As a consequence, the distance between any two vertices coincides with the number of hyperplanes separating them. 
    \item {Mediangle graphs are clique-gated partial Hamming graphs and bipartite mediangle graphs are partial cubes.}
\end{enumerate}
\end{theorem}

From the definition of mediangle graphs, it immediately follows that convex subgraphs of mediangle graphs are mediangle. Since intervals in partial Hamming graphs are convex, intervals in  mediangle graphs are also mediangle: 

\begin{lemma}\label{lem:convex} Convex subgraphs, in particular intervals, of  mediangle graphs are mediangle. \qed
\end{lemma}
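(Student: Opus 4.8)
The plan is to transfer the two defining conditions of $G$ directly to $H=G[A]$, exploiting that a convex subgraph is isometric. First I would record the basic consequence of convexity: for any $p,q\in A$ the interval computed in $H$ coincides with $[p,q]$ computed in $G$ (since $[p,q]\subseteq A$), and in particular distances in $H$ agree with those in $G$, and $H$ is bipartite.

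To verify the Cycle Condition in $H$, take $u,x,y,z\in A$ with $d(u,x)=d(u,y)=d(u,z)-1$ and $d(x,z)=d(y,z)=1$; by isometry these relations hold in $G$ as well. Applying the Cycle Condition in $G$ yields a convex cycle $C$ of $G$ containing the edges $zx,zy$ and whose vertex $\oz$ opposite to $z$ lies in $[u,x]\cap[u,y]$ (intervals taken in $G$). The key step is to show $C\subseteq A$. Since $u,x\in A$ and $A$ is convex, $[u,x]\subseteq A$, so $\oz\in A$; and $z\in A$ by hypothesis. Now every vertex of the even convex cycle $C$ lies on a shortest $(z,\oz)$-path inside $C$, hence in $[z,\oz]$ by convexity of $C$; convexity of $A$ then gives $C\subseteq[z,\oz]\subseteq A$.

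Thus $C$ is a cycle of $H$, and it remains convex in $H$ because for any two of its vertices $p,q$ we have $[p,q]$ in $H$ equal to $[p,q]$ in $G$, which is contained in $C$. The opposite-vertex condition transfers as well, since $\oz\in[u,x]\cap[u,y]$ (in $G$) equals the corresponding intersection computed in $H$. Hence the Cycle Condition holds in $H$. For the Intersection of Convex Cycles, I would observe that any convex cycle $C'$ of $H$ is automatically a convex cycle of $G$: it lies in $A$, and for $p,q\in C'$ one has $[p,q]$ in $G$ equal to $[p,q]$ in $H$, which is contained in $C'$. Therefore two convex cycles of $H$ are two convex cycles of $G$ and meet in at most one edge by the corresponding property of $G$.

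The main (and essentially only nonroutine) point is the containment $C\subseteq A$, which hinges on the fact that the opposite vertex $\oz$ furnished by the Cycle Condition already lies in the convex set $A$, forcing the whole cycle into $[z,\oz]\subseteq A$. The ``in particular intervals'' clause then follows since bipartite mediangle graphs are partial cubes by Theorem~\ref{mediangle->partial-cubes}, and intervals of partial cubes are convex.
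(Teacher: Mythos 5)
Your proof is correct and follows the same route as the paper, which simply asserts that the lemma ``immediately follows from the definition'' of mediangle graphs and the convexity of intervals in partial cubes. You usefully make explicit the one point the paper leaves implicit --- that the convex cycle $C$ lies in $[z,\oz]\subseteq A$ because $\oz\in[u,x]\subseteq A$ and every vertex of $C$ lies on a geodesic between $z$ and $\oz$ --- but this is a fleshing-out of the paper's argument, not a different one.
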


\section{Bipartite mediangle graphs and COMs}

This section is dedicated to the proofs of Theorem~\ref{t:mediangle->COM} and the second part of Theorem \ref{thm:MediangleGatedAntipod}. %, which shows how bipartite mediangle graphs can be endowed with contractible cell structures by identifying them with tope graphs of COMs. 
This will be our first step towards the proof of Theorems~\ref{thm:GeneralCase} and \ref{thm:MediangleGatedAntipod}. 
Some definitions and results about OMs and COMs are recorded in Subsection~\ref{section:Matroids}. Theorem~\ref{t:mediangle->COM} is proved in Subsection~\ref{section:ProofBipartite} and the second part of Theorem \ref{thm:MediangleGatedAntipod} is proved in  Subsection~\ref{section:ProofBipartiteCell}.

\subsection{OMs and COMs}\label{section:Matroids}

We recall the basic theory of finite OMs
and COMs from~\cite{BjLVStWhZi} and~\cite{BaChKn}, respectively. 
Then we define finitary COMs using the definition of \cite{DK25}.

Let $U$ be a (possibly infinite) set called \emph{universe} or \emph{ground set} and let $\covectors$ be a {\it system of
sign vectors}, i.e., maps from $U$ to $\{-1,0,+1\}$. 
A system $\covectors$ of sign-vectors is \emph{simple} if $\{X_e: X\in \covectors\}=\{-1,0,+1\}$ for all $e\in U$. We will assume this property to hold for the entire article. The elements of
$\covectors$ are called \emph{covectors} and denoted by capital letters
$X, Y, Z$. For $X \in \covectors$, the subset $\underline{X} = \{e\in U:
X_e\neq 0\}$ is the \emph{support} of $X$ and  its complement
$X^0=U\setminus \underline{X}=\{e\in U: X_e=0\}$ is the \emph{zero set} of $X$. Let also $X^-=\{ e\in U: X_e=-1\}$ and $X^+=\{ e\in U: X_e=+1\}$. For $X,Y\in \covectors$, $\Sep(X,Y)=\{e\in U: X_eY_e=-1\}$ is the
\emph{separator} of $X$ and $Y$. The \emph{composition} of $X$ and $Y$ is the
sign
vector $X\circ Y$, where for all $e\in U$,
$(X\circ Y)_e = X_e$ if $X_e\ne 0$  and  $(X\circ Y)_e=Y_e$ if $X_e=0$. 

\begin{definition} [Complex of Oriented Matroids] \label{def:COM}
A \emph{complex of oriented matroids} (COMs) is a system of sign vectors
$\M=(U,\covectors)$ on a finite ground set $U$ satisfying the following axioms:
\begin{itemize}
		\item[{\bf (SE)}] ({\sf Strong elimination}) for each pair
		$X,Y\in\covectors$ and for each $e\in \Sep(X,Y)$, there exists $Z \in
		\covectors$ such that $Z_e=0$ and $Z_f=(X\circ Y)_f$ for all $f\in
		U\setminus \Sep(X,Y)$.
	\item[{\bf (FS)}] ({\sf Face symmetry}) $X\circ -Y \in  \covectors$
	for all $X,Y \in  \covectors$.
\end{itemize}
\end{definition}

\begin{figure}[ht]
    \centering
    \includegraphics[width=.9\linewidth]{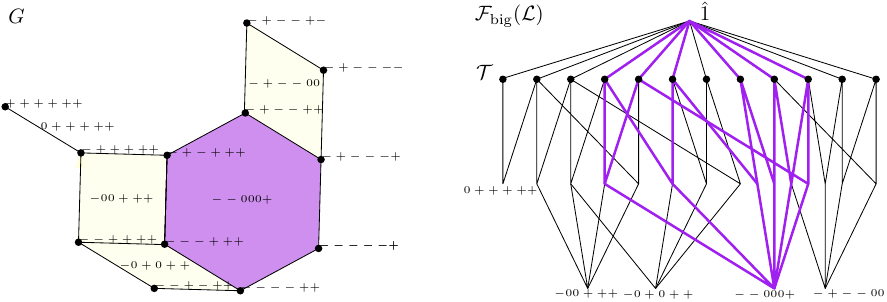}
    \caption{Left: a bipartite mediangle graph $G$ with 4 antipodal subgraphs of rank 2. Right: the big face semilattice of the COM $\mathcal{M}$ whose tope graph is $G$. The purple antipodal subgraph on the left corresponds to a face on the right. In order to not convolute the drawing only  minimal and maximal covectors are labelled.}
    \label{fig:medianlge}
\end{figure}

The axioms {\bf (SE)} and {\bf (FS)} imply that $\covectors$ is closed under composition, i.e., if $X,Y\in \covectors$, then $X\circ Y\in \covectors$. 
A very important subclass of COMs are OMs. The following definition of OMs is more economic but equivalent to others~\cite{BaChKn}:
\begin{definition}[Oriented Matroid] \label{def:OM}
	An \emph{oriented matroid} (OM)  is a COM
$\M=(U,\covectors)$ on a finite ground set $U$ that  additionally satisfies
	\begin{itemize}
        \item[{\bf (Z)}]  the zero sign vector ${\bf 0}$ belongs to $\covectors$.
	\end{itemize}
\end{definition}

Let $\leq$ be the product ordering on $\{-1,0,+1\}^{U} $
relative to  the ordering $0 \leq -1, +1$.
The poset $\mathcal{F}_{\mathrm{big}}(\covectors)=(\covectors\cup\{\hat{1}\},\le)$ of a COM $\cM$ with an artificial global maximum
$\hat{1}$ forms the (graded)
\emph{big face semilattice}. The length of
a longest chain in $\mathcal{F}_{\mathrm{big}}(\covectors)$ minus $\hat{1}$
is the \emph{rank} of $\cM$ and denoted $\rk(\cM)$.  
The \emph{topes} $\topes$ of $\cM$ are the co-atoms of
$\mathcal{F}_{\mathrm{big}}(\covectors)$, i.e., those elements $T\in \covectors$ such that in $\mathcal{F}_{\mathrm{big}}(\covectors)$ only $T$ itself and $\hat{1}$ are larger or equal $T$. 
By assuming simplicity of $\cM$ the topes 
are  $\{-1,+1\}$-vectors. Thus, ${\mathcal T}$ can be seen as a family of subsets
 of $U$, where for each $T\in\mathcal{T}$, an element
 $e\in U$ belongs to the corresponding
 set if and only if $T_e=+1$. 
The \emph{tope graph} $G(\cM)$ of a COM $\cM$ is 
the subgraph of the hypercube $\{+1,-1\}^U$ induced by the vertices corresponding to
$\topes$. Tope
graphs of COMs are partial cubes and $\M$ can be recovered up to
isomorphism from $G(\M)$. {Thus, COMs can be treated as graphs.} 

\begin{definition} [Simpliciality] \label{def:SOM}
A tope $T$ of an OM $\cM=(U,\covectors)$ of rank $r$ is \emph{simplicial} if the degree of $T$ in the tope graph $G(\cM)$ is $r$, see~\cite[pp.161 \& 185]{BjLVStWhZi}. Equivalently, $T$ is simplicial if the order interval $[\varnothing,T]$ in $\mathcal{F}_{\mathrm{big}}(\covectors)$ is a Boolean lattice. An OM is \emph{simplicial} if all its topes are simplicial, see~\cite[pp.161]{BjLVStWhZi}. 
\end{definition}

For a covector $X\in \covectors$, the
\emph{face} of $X$ is $\Face(X):=\{Y\in\covectors: X\leq Y\}$ and the \emph{star} of $X$ is the set of topes 
 $\Star(X):=\{Y\in{\mathcal T}: X\leq Y\}$, see~\cite{BaChKn,BjLVStWhZi}. We will denote by $\Cell(X)$ the subgraph of the tope graph $G(\cM)$ induced by $\Star(X)$ and call it the \emph{cell of $X$}. A \emph{facet} of  $\Face(X)$ is a maximal face $\Face(Y)$ of $\cM$ properly contained in $\Face(Y)$. 
By \cite[Lemma 4]{BaChKn}, each face $\Face(X)$ of a COM $\cM$ is isomorphic to an OM. The topes in $\Star(X)$ induce a subgraph $\Cell(X)$ of $G(\covectors)$, which is isomorphic
to the tope graph $G(\Face(X))$ of $\Face(X)$. The COMs and the cells of the tope graph of COMs have been  characterized in the following way:

\begin{theorem}[\hspace{-.5pt}\cite{KnMa}] \label{cells=gated+antipodal}  A partial cube $G$ is the tope graph of a COM if and only if all antipodal subgraphs of $G$ are gated. Furthermore, a subgraph $G'$ of a tope graph of a COM $\cM=(U,\covectors)$ is a cell of $G(\cM)$ (i.e., there exists $X\in \covectors$ such that $G'=\Cell(X)$) if and only if $G'$ is an antipodal (and thus gated) subgraph of $G(\cM)$. 
\end{theorem}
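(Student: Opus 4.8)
The plan is to reconstruct the covector system of a putative COM directly from the metric data of the partial cube $G\subseteq Q(U)$, using antipodal subgraphs as the combinatorial avatars of faces. Fix for each $\Theta$-class $E_e$ a positive halfspace, so that every vertex $w$ acquires a $\{\pm1\}$-sign vector $\sigma(w)$, its tope. To an induced subgraph $A$ I would attach the sign vector $X_A$ with $(X_A)_e=0$ if $E_e$ crosses $A$ (both halfspaces meet $A$) and $(X_A)_e=\sigma(w)_e$ for any $w\in A$ otherwise; this is well defined exactly when $A$ lies on one side of every non-crossing class. The key preliminary observation is that an antipodal subgraph is an interval $A=[v,\bar v]$ and hence convex (Lemma~\ref{l:between} together with isometry), so $A=\bigcap\{H:A\subseteq H,\ H\text{ a halfspace}\}=\{w:\sigma(w)_e=(X_A)_e\ \forall e\in\underline{X_A}\}$; in tope language this says $\Star(X_A)=V(A)$ whenever $X_A$ is in fact a covector. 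The correspondence $A\mapsto X_A$ is intended to be the bijection between antipodal subgraphs and faces, turning reverse inclusion into the order $\le$ on covectors and sending single vertices to topes.

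For the forward implication I would show that in a COM every cell $\Cell(X)$ is gated and antipodal, and conversely that every antipodal subgraph is a cell. Antipodality of $\Cell(X)$ is immediate from $\Cell(X)\cong G(\Face(X))$ and \cite[Lemma~4]{BaChKn}: $\Face(X)$ is an OM, and an OM tope graph is antipodal because (FS) together with (Z) force $-\covectors=\covectors$ and make $[T,-T]$ the whole tope graph. Gatedness comes from composition-closedness: for a tope $T$ the sign vector $X\circ T$ is again a tope of $\Star(X)$, and a one-line coordinate check gives $X\circ T\in[T,T']$ for every $T'\in\Star(X)$, so $X\circ T$ is the gate of $T$. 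Finally, to see that an arbitrary antipodal $A=[v,\bar v]$ is a cell, I would take $T=\sigma(v)$ and $\bar T=\sigma(\bar v)$, whose separator $\Sep(T,\bar T)$ is exactly the set of classes crossing $A$, and invoke the standard consequence of iterated strong elimination that there is a covector with zero-set precisely $\Sep(T,\bar T)$ matching $T$ off the separator, namely $X_A$ (the infimum of the topes of $A$ in the big face semilattice). Combined with the convexity computation this gives $\Cell(X_A)=A$, so antipodal $\Rightarrow$ cell $\Rightarrow$ gated, which settles both the ``only if'' of the first claim and the forward direction of the cell characterization.

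For the converse I would set $\covectors:=\{X_A: A\text{ an antipodal subgraph of }G\}$ and verify the axioms of Definition~\ref{def:COM}. By the convexity computation $\Star(X_A)=V(A)$, the topes of this system are exactly the $\sigma(w)$, its tope graph is $G$, and $\le$ matches reverse inclusion. Face symmetry should follow from the antipodal symmetry of the construction: reflecting $A$ across its central covector $X_A$ (realized graph-theoretically by the antipodal automorphism) produces, for two antipodal subgraphs $A,B$, an antipodal subgraph whose central sign vector is $X_A\circ(-X_B)$.

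The hard part, and the main obstacle, is strong elimination. Given antipodal subgraphs $A,B$ and a class $e\in\Sep(X_A,X_B)$, one must exhibit an antipodal subgraph $C$ with $(X_C)_e=0$ agreeing with $X_A\circ X_B$ off $\Sep(X_A,X_B)$. The natural candidate is an antipodal piece of the gated hull of $A\cup B$ restricted along $E_e$, and the whole point of the hypothesis is that gatedness of antipodal subgraphs is precisely what should guarantee that this hull is again antipodal and sits on the correct sides of the non-eliminated classes. I expect that making this rigorous requires an induction on rank (finite since $U$ is finite), peeling off one $\Theta$-class at a time and using that the restrictions of $G$ to the two halfspaces bounded by $E_e$ are convex, hence partial cubes in which all antipodal subgraphs are again gated, so the inductive hypothesis applies on each side. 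Checking that the antipodal subgraph produced by this gated-hull-and-restrict procedure carries exactly the sign pattern demanded by (SE) is where essentially all the work lies.
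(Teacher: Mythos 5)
First, a point of comparison: the paper does not prove Theorem~\ref{cells=gated+antipodal} at all --- it is imported verbatim from Knauer and Marc \cite{KnMa} --- so your proposal must be measured against that paper's proof, which is a long argument and not a direct axiom verification. The parts of your proposal that work are exactly the ``easy'' halves, and there you match the standard arguments: gatedness of $\Cell(X)$ via the gate $X\circ T$ (correct, and a one-line coordinate check as you say), antipodality of cells via $\Face(X)$ being an OM by \cite[Lemma 4]{BaChKn}, and the identification of an antipodal subgraph $A=[v,\bar v]$ with the intersection of the halfspaces containing it (fine, since intervals in partial cubes are convex and convex subgraphs of partial cubes are intersections of halfspaces).

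The genuine gaps are the two steps you dispatch quickly. (a) \emph{Antipodal $\Rightarrow$ cell}: you invoke ``the standard consequence of iterated strong elimination'' to produce a covector with zero set exactly $\Sep(T,\bar T)$ agreeing with $T$ elsewhere. No such standard consequence exists: (SE) applied to $e\in\Sep(X,Y)$ constrains $Z$ only \emph{off} the separator and leaves the remaining separator coordinates of $Z$ arbitrary, so iteration does not drive the whole separator to zero; moreover your fallback description of $X_A$ as ``the infimum of the topes of $A$'' presumes a meet that the big face semilattice of a COM is not guaranteed to possess (unlike the OM case, there is no bottom element). In fact, for a pair of topes $T,T'$ the existence of a covector $X$ with $X^0=\Sep(T,T')$ and $X=T$ off the separator is essentially \emph{equivalent} to $[T,T']$ being a face --- which is the statement being proved --- so as written this step is circular. (b) \emph{Gated antipodal subgraphs $\Rightarrow$ COM}: this is the hard content of the theorem, and you prove neither axiom. (FS) is asserted via a ``reflection'' whose output --- an antipodal subgraph realizing the sign vector $X_A\circ(-X_B)$ --- is exactly what needs to be constructed, and for (SE) you explicitly concede that the verification ``is where essentially all the work lies.'' For what it is worth, \cite{KnMa} does not close this gap by checking (SE) directly for the system $\{X_A\}$ either: their proof of the converse proceeds by induction on the number of $\Theta$-classes, using that the class of partial cubes with all antipodal subgraphs gated is closed under $\Theta$-contractions (zone graphs), together with an excluded partial-cube-minor characterization as an intermediate equivalence, and this occupies the bulk of their paper. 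So your proposal correctly isolates where the difficulty lives but supplies neither of the key ideas needed to cross it.
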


COMs lead to contractible regular cell complexes.  By \cite[Section 11]{BaChKn},  replacing each combinatorial face $\Face(X)$ of a COM $\cM$ by a PL-ball $\sigma(\Face(X))$, we obtain a regular cell complex $\Delta(\covectors)$. 

\begin{theorem}[\hspace{-.5pt}{\cite[Proposition 15]{BaChKn}}] \label{COMs-contractible}   For a COM $\cM=(U,\covectors)$, 
the cell complex $\Delta(\covectors)$ is contractible.
\end{theorem}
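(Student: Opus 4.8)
The plan is to compute the homotopy type of $\Delta(\covectors)$ through the order complex of its face poset, and then to contract by induction on $|U|$. First I would record the structural fact coming from the construction in \cite[Section 11]{BaChKn}: $\Delta(\covectors)$ is a \emph{regular} cell complex whose cell $\sigma(\Face(X))$ lies in the closure of $\sigma(\Face(Y))$ precisely when $X\le Y$, so that its face poset is exactly $(\covectors,\le)$ (the topes, being maximal, index the top-dimensional cells). Since every regular CW complex is homeomorphic to the order complex of its face poset (its barycentric subdivision), it suffices to prove that the order complex $\|(\covectors,\le)\|$ is contractible.

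The single-cell case is both immediate and instructive. If $\cM$ is an OM, then by axiom {\bf (Z)} the zero vector $\mathbf{0}$ is a global minimum of $(\covectors,\le)$; hence its order complex is a cone with apex the vertex $\mathbf{0}$ and is therefore contractible. Applied to every face $\Face(X)$, which is an OM by \cite[Lemma 4]{BaChKn}, this confirms that each $\sigma(\Face(X))$ is a contractible PL-ball, and it recovers the realizable intuition that the cell complex of a central arrangement in $\R^r$ deformation retracts to its origin $\mathbf{0}$.

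For a general COM the poset $(\covectors,\le)$ has no global minimum, so one cannot cone directly, and here I would induct on $|U|$, the base case $U=\varnothing$ being a single point. Fix $e\in U$ and pass to the deletion $\cM\setminus e$, again a COM, together with the order-preserving surjection $\pi\colon\covectors\to\covectors\setminus e$ that forgets the coordinate $e$. Geometrically, $\pi$ removes the pseudohyperplane indexed by $e$: the (at most three) cells over a fixed $W\in\covectors\setminus e$, namely those $X$ with $X\setminus e\in\{W^{+},W^{0},W^{-}\}$, merge into the single cell $\sigma(\Face(W))$ of $\Delta(\covectors\setminus e)$ carried by the \emph{same} underlying space. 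I would make this precise with Quillen's fiber lemma, showing that $\pi$ induces a homotopy equivalence $\|(\covectors,\le)\|\simeq\|(\covectors\setminus e,\le)\|$; this reduces to checking that each fiber $\{X\in\covectors:\ X\setminus e\le W\}$ is contractible, which geometrically is the face poset of the closed convex cell over $W$. Iterating deletes all of $U$ and leaves a point, giving contractibility.

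The main obstacle is exactly this deletion step in the non-OM case: lacking a global minimum, contractibility must be extracted from the convexity encoded in the COM axioms. The delicate point is to verify that the three cells across the pseudohyperplane $S_e$ genuinely fuse into a single PL-ball of the correct dimension — equivalently, that the fibers of $\pi$ are contractible — and this is where {\bf (FS)} and {\bf (SE)} do the real work, governing which lifts $W^{+},W^{0},W^{-}$ actually occur and ensuring that their union is a ball rather than a space with nontrivial topology. Establishing this fusion rigorously, using the PL-realization of the faces from \cite[Section 11]{BaChKn}, is the technical heart of the argument; everything else is the formal bookkeeping of the induction.
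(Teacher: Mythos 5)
The paper itself contains no proof of Theorem~\ref{COMs-contractible}: it is imported from \cite{BaChKn}, where contractibility is obtained by an induction on the ground set that splits $\Delta(\covectors)$ along a hyperplane into halfspace pieces and glues them along their common subcomplex (all of these being COMs), and the present paper only extends the statement to finitary COMs via directed unions and Whitehead's theorem. Your route is genuinely different: you pass to the order complex of $(\covectors,\le)$ and push forward along the deletion map $\pi\colon\covectors\to\covectors\setminus e$ using Quillen's fiber lemma. Both arguments eliminate one element of $U$ per step, but yours trades the topological gluing lemma for a purely poset-theoretic verification, which is a clean and viable alternative.

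However, as written your proposal has a genuine gap precisely where you locate ``the technical heart'': the contractibility of the fibers $\pi^{-1}\bigl((\covectors\setminus e)_{\le W}\bigr)=\{X\in\covectors: X\setminus e\le W\}$ is asserted rather than proved, and you frame it as something stronger and unnecessary, namely that the cells over $W$ fuse into a PL-ball of the correct dimension --- for the fiber lemma only contractibility of these subposets is needed, and no PL geometry enters at all. The missing argument is in fact short. Write $W^{+},W^{0},W^{-}$ for the three sign vectors restricting to $W$ off $e$. First, the set of lifts of $W$ lying in $\covectors$ is either a singleton or all three: if $W^{0},W^{+}\in\covectors$ then {\bf (FS)} gives $W^{-}=W^{0}\circ(-W^{+})\in\covectors$, and if $W^{+},W^{-}\in\covectors$ then {\bf (SE)} applied to $e\in\Sep(W^{+},W^{-})=\{e\}$ gives $W^{0}\in\covectors$. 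If the lift is a unique covector $L$, then for any $X$ in the fiber the composition $X\circ L$ lies in $\covectors$ and is again a lift, hence equals $L$, which forces $X\le L$; so the fiber is a cone. If all three lifts occur, the fiber is the union of the two principal ideals below $W^{+}$ and $W^{-}$, whose intersection is the principal ideal below $W^{0}$; the order complex of the union is two cones glued along a cone, hence contractible by the gluing lemma. Inserting this closes your induction. Finally, correct a small slip: in $\Delta(\covectors)$ the topes are the \emph{vertices} and the minimal covectors index the top-dimensional cells, so the face poset is the order dual of $(\covectors,\le)$; this is harmless for your reduction, since a poset and its dual have the same order complex, but your claim that topes index the top-dimensional cells is backwards.
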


To define infinite COMs as systems of sign vectors of $\{ \pm 1,0\}^U$, we are lead to 
the notion of  finitary COMs introduced in  \cite{DK25} in the study of finitary affine oriented matroids:
\begin{definition}[Finitary COM]
 A \emph{Finitary} Complex of Oriented Matroids is a 
 system of sign vectors
	$\M=(U,\covectors)$ on a countable set $U$ satisfying (SE)  and (FS) and the following axioms:
 \begin{itemize}
  \item[{\bf (S)}] $X,Y\in\covectors\implies |\Sep (X,Y)|<\infty$ \hfill(finite separators),
  \item[{\bf (PZ)}] $X\in\covectors\implies |X^+|,|X^0|<\infty$ \hfill(finite positive and zero sets),
  \item[{\bf (I)}] $\vert \mathcal{F}_{\mathrm{big}}(\covectors)_{\leq X}\vert < \infty$ \hfill(finite ideals).
 \end{itemize}
\end{definition}

From the definition it follows that finite restrictions of finitary COMs are finite COMs. In particular, $|X^0|<\infty$ yields that faces are finite OMs. The assumption $|X^+|<\infty$ for all $X\in \covectors$, allows to view $\mathcal{T}$ as a subset of vertices of the hypercube $Q(U)$, which by definition consists of the finite subsets of $U$. In particular, we get that the tope graph $G(\cM)$ of a finitary COM is an isometric subgraph of $Q(U)$.
Theorem \ref{COMs-contractible} extends to finitary COMs (for which the cell complex $\Delta(\covectors)$ is defined as in the finite case):

\begin{theorem}\label{finitaryCOMs-contractible}
    The cell complex $\Delta(\covectors)$ of a finitary COM $\cM=(U,\covectors)$ is contractible.
\end{theorem}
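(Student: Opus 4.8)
The plan is to exhibit $\Delta(\covectors)$ as an increasing union of finite contractible subcomplexes and then to conclude by a standard compactness/Whitehead argument. Since $U$ is countable, I would fix finite sets $U_1\subseteq U_2\subseteq\cdots$ with $\bigcup_n U_n=U$, and fix any tope $T_0\in\topes$ (by simplicity $T_0$ has full support). For each $n$ I would consider
\[\covectors_n:=\{X\in\covectors:\ X^0\cup\Sep(T_0,X)\subseteq U_n\},\]
the covectors whose zero set lies in $U_n$ and which therefore agree with $T_0$ off $U_n$. Axioms (PZ) and (S) guarantee that $X^0$ and $\Sep(T_0,X)$ are finite, so every $X\in\covectors$ lies in $\covectors_n$ for all large $n$, whence $\bigcup_n\covectors_n=\covectors$; moreover a vector of $\covectors_n$ is determined by its restriction to $U_n$, so $\lvert\covectors_n\rvert\le 3^{\lvert U_n\rvert}<\infty$. (By (PZ) each $\Face(X)$ is a finite OM, so $\Delta(\covectors)$ is a genuine regular CW complex and Whitehead's theorem will apply.)

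The heart of the argument is to show that each $\covectors_n$ is at the same time an up-set of $(\covectors,\le)$ and the covector set of a finite COM. For the up-set property, if $X\in\covectors_n$ and $X\le Y\in\covectors$, then $Y$ agrees with $X$ on $\underline X$ and with $T_0$ off $U_n$, so $Y^0\subseteq X^0\subseteq U_n$ and $\Sep(T_0,Y)\subseteq U_n$, i.e.\ $Y\in\covectors_n$. For the COM property, note first that since $X,Y\in\covectors_n$ both agree with $T_0$ off $U_n$, we have $\Sep(X,Y)\subseteq U_n$; this single observation forces the witnesses supplied by (SE) and (FS) for $\cM$ to stay inside $\covectors_n$. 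Indeed $X\circ -Y$ has zero set $X^0\cap Y^0\subseteq U_n$ and equals $T_0$ off $U_n$, while any $Z$ produced by (SE) satisfies $Z_e=0$ with $e\in\Sep(X,Y)\subseteq U_n$ and $Z=X\circ Y=T_0$ off $U_n$; in both cases the vector lies in $\covectors_n$. Deleting the coordinates $U\setminus U_n$, on which every vector of $\covectors_n$ is constant, presents $\covectors_n$ as a finite COM on $U_n$; its standard simplification is a finite simple COM with isomorphic face poset, and since a regular cell complex is determined up to homeomorphism by its face poset this does not change $\Delta(\covectors_n)$. Hence by Theorem~\ref{COMs-contractible} each $\Delta(\covectors_n)$ is contractible.

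Because $\covectors_n$ is an up-set, for $X\in\covectors_n$ the face $\Face(X)$ is entirely contained in $\covectors_n$ and is the same whether computed in $\cM$ or in $\covectors_n$; hence $\Delta(\covectors_n)$ is a genuine subcomplex of $\Delta(\covectors)$, with matching cells and attaching maps, and these subcomplexes increase to all of $\Delta(\covectors)$. To finish I would use that a compact subset of a CW complex meets only finitely many cells and therefore lies in a finite subcomplex: the image of any map $S^k\to\Delta(\covectors)$, and of any null-homotopy $S^k\times[0,1]\to\Delta(\covectors)$, is compact and so is contained in some $\Delta(\covectors_n)$. Consequently $\pi_k(\Delta(\covectors))=\varinjlim_n\pi_k(\Delta(\covectors_n))=0$ for every $k$, and being a connected CW complex with trivial homotopy groups, $\Delta(\covectors)$ is contractible by Whitehead's theorem.

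I expect the only genuine obstacle to be the crux in the second paragraph: one must choose the finite approximations so as to be simultaneously subcomplexes of $\Delta(\covectors)$—which imposes the up-set condition with respect to $\le$—and honest COMs—so that Theorem~\ref{COMs-contractible} becomes available. The definition of $\covectors_n$ through $X^0\cup\Sep(T_0,X)\subseteq U_n$ is engineered precisely so that the inclusion $\Sep(X,Y)\subseteq U_n$ keeps the (SE)- and (FS)-witnesses of $\cM$ inside $\covectors_n$; once this is established, the passage to the colimit is routine.
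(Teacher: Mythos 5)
Your proof is correct, and its overall skeleton agrees with the paper's: exhaust $\Delta(\covectors)$ by finite pieces that are contractible by Theorem~\ref{COMs-contractible}, then conclude by Whitehead's theorem. The finite approximations, however, are genuinely different. The paper takes the same filtration $U_1\subseteq U_2\subseteq\cdots$ of $U$ but uses the restrictions $\cM_i=\cM_{|U_i}$ (deletion of the remaining coordinates), which are finite COMs, and then asserts in one line that $\Delta(\covectors)$ is the directed union of the complexes $\Delta(\covectors_i)$. You instead stay inside $\covectors$: anchoring at a fixed tope $T_0$ and keeping only the covectors with $X^0\cup\Sep(T_0,X)\subseteq U_n$ produces up-sets $\covectors_n$ of $(\covectors,\le)$ that are simultaneously (after deleting the coordinates on which they are constantly equal to $T_0$) finite COMs and \emph{literal} subcomplexes of $\Delta(\covectors)$, with faces computed consistently in $\covectors_n$ and in $\covectors$. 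This buys you precisely what the paper's phrasing leaves implicit: the restrictions $\covectors_{|U_i}$ live on different ground sets and arise as images of a (generally non-injective) restriction map rather than as subsystems, so identifying the $\Delta(\covectors_i)$ with an increasing family of subcomplexes of $\Delta(\covectors)$ requires exactly the kind of bookkeeping your up-set construction makes automatic --- in effect, your $\covectors_n$ is the fiber of the restriction to $U_n$ over the truncation of $T_0$. Your verification that the (SE)- and (FS)-witnesses of $\cM$ remain in $\covectors_n$ (via $\Sep(X,Y)\subseteq U_n$ and the fact that everything equals the full-support vector $T_0$ off $U_n$) is sound, as are the two mildly delicate points you flag and handle: simplicity (disposed of by simplification preserving the face poset) and the compactness step feeding Whitehead's theorem. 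Note that, like the paper's proof, yours never needs the finite-ideals axiom {\bf (I)}; your version is longer than the paper's but more self-contained at the directed-union step.
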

\begin{proof}
Consider any total ordering of the set $U=\{ e_1,e_2,\ldots,\}$. For any $i$, let $U_{i}=\{ e_1,\ldots,e_i\}$, $\cM_i=\cM_{|U_i}$ be the finite COM obtained by restricting to $U_i$, and let $\Delta(\covectors_i)$ be the cell complex of $\cM_i$. Then  $\Delta(\covectors_i)$ is contractible by Theorem \ref{COMs-contractible}. The cell complex $\Delta(\covectors)$ is the directed union of the contractible finite cell complexes $\Delta(\covectors_i)$. Thus, $\Delta(\covectors)$ is contractible by the classical theorem of Whitehead \cite[Theorem 4.5]{Hat}. 
\end{proof}

\subsection{Proof of Theorem~\ref{t:mediangle->COM}}\label{section:ProofBipartite}

To prove  that bipartite mediangle graphs are tope graphs of COMs, we show that bipartite mediangle graphs are apiculate and then that apiculate partial cubes are tope graphs of COMs.

\begin{lemma}\label{mediangle->apiculate}
    If $G=(V,E)$ is a bipartite mediangle graph, then $G$ is apiculate.
\end{lemma}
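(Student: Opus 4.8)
The plan is to fix an arbitrary basepoint $u$ and show that $G_{\preceq_u}$ is a meet-semilattice. By Theorem~\ref{mediangle->partial-cubes}, $G$ is a partial cube, so I would first identify $G$ with an isometric subgraph of a hypercube $Q(U)$ and relabel coordinates so that $u=\varnothing$; then $v\preceq_u v'$ becomes plain inclusion $v\subseteq v'$, $[u,v]=\{w\in V: w\subseteq v\}$, and $[u,x]\cap[u,y]=\{w\in V: w\subseteq x\cap y\}$. In this language a $u$-apex of $x,y$ is exactly a maximal vertex contained in $x\cap y$, and the goal becomes: $[u,x]\cap[u,y]$ has a \emph{unique} maximal element. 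Since intervals embed into a cube of dimension $d(u,x)$ they are finite, so I would argue by induction on $d(u,x)+d(u,y)$.

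The comparable case $x\subseteq y$ (or $y\subseteq x$) is immediate, so assume $x,y$ incomparable and, for contradiction, that there are two distinct $u$-apices $a\neq b$. I would first record the reductions. Along a shortest path from $a$ up to $x$ one reaches a down-neighbor $p=x\setminus\{e_a\}$ of $x$ with $a\subseteq p\subseteq x$ (using Lemma~\ref{l:between}); by induction $[u,p]\cap[u,y]$ has a greatest element, and since $a$ is maximal in the larger set $[u,x]\cap[u,y]$ this forces $a=p\wedge_u y$. Symmetrically $b=q\wedge_u y$ for a down-neighbor $q=x\setminus\{e_b\}$ with $b\subseteq q$. If $e_a=e_b$ then $a,b$ both lie in $[u,p]\cap[u,y]$, contradicting the inductive uniqueness there; hence $e_a\neq e_b$. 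Moreover, if $a\subseteq q$ then $a\in[u,q]\cap[u,y]$ would give $a\preceq_u b$, impossible for two distinct maximal elements; thus $a\not\subseteq q$, i.e. $e_b\in a$, and symmetrically $e_a\in b$.

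Now I would invoke the Cycle Condition for the quadruple $(u;p,q;x)$ (the hypotheses $d(u,p)=d(u,q)=d(u,x)-1$ and $d(p,x)=d(q,x)=1$ hold): it yields a convex cycle $C$ through the edges $xp,xq$ whose vertex $\overline{x}$ opposite to $x$ lies in $[u,p]\cap[u,q]$, and by Theorem~\ref{convex-cycles->gated} this cycle is gated. Writing $S_C$ for the set of $\Theta$-classes crossed by $C$, every vertex of $C$ lies in $[\overline{x},x]$, the two neighbors of $x$ in $C$ are $p,q$, and $\overline{x}$ (being antipodal in $C$ to the $C$-farthest vertex $x$) is the gate of $u$ in $C$. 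Using gatedness I would compute the gate $y^{\ast}$ of $y$ in $C$: from $a\subseteq y\cap p\subseteq y^{\ast}$ and $b\subseteq y\cap q\subseteq y^{\ast}$ one gets $a\cup b\subseteq y^{\ast}$; as the classes $e_a,e_b$ are the two classes of $C$ incident to $x$, the only vertex of $C$ containing both of them relative to $\overline{x}$ is $x$, so $y^{\ast}=x$. The gate relation $x\in[y,\overline{x}]$ then forces $S_C\subseteq x\cap y$, while $\overline{x}\not\subseteq y$ (otherwise $x=\overline{x}\cup S_C\subseteq y$ would contradict incomparability). Thus the gated convex cycle $C$ sits inside $[u,x]$ with all of its coordinates inside $x\cap y$ but with its bottom vertex $\overline{x}$ not below $y$.

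The main obstacle is the final step: converting this rigid configuration into a contradiction. The difficulty is precisely that the deep common lower bound $\overline{x}$ produced by the Cycle Condition is incomparable with $y$, so it cannot be fed back into the induction directly, and the maximality of $a,b$ together with the forced $S_C\subseteq x\cap y$ make the purely local picture self-consistent. I expect to close the argument by exploiting the antipodality and gatedness of $C$ together with a second application of the Cycle Condition (or the Intersection-of-Convex-Cycles axiom) in order to produce either a common upper bound of $a$ and $b$ lying below both $x$ and $y$ — contradicting their maximality — or a counterexample with strictly smaller $d(u,x)+d(u,y)$. This gluing step is where the full strength of the mediangle axioms, beyond what is used for the reductions above, must come in.
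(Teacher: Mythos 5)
Your reductions are correct as far as they go (the induction setup, the identification $a=p\wedge_u y$, $b=q\wedge_u y$ for distinct down-neighbors $p,q$ of $x$, the application of the Cycle Condition at $x$, and the computation that the gate of $y$ in the resulting cycle $C$ is $x$ itself with $S_C\subseteq x\cap y$ but $\overline{x}\not\subseteq y$), but the proof is incomplete, and you say so yourself: the final ``gluing step'' you defer is not a routine verification but the actual core of the argument. The structural reason you stall is that you invoke the Cycle Condition at the \emph{top} vertex $x$ with its two down-neighbors, which yields a gated cycle contained in $[u,x]$ only; as your own computation shows, its bottom vertex $\overline{x}$ is incomparable with $y$, so the cycle lies outside $[u,x]\cap[u,y]$ and cannot be fed into the induction hypothesis. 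The configuration you reach is, as you suspect, locally self-consistent, and no further analysis of this single cycle produces the contradiction.

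The paper's proof works from the \emph{bottom} instead, and this is the missing idea. It first splits on whether $[u,a]\cap[u,b]=\{u\}$: if not, a neighbor $u'$ of $u$ in $[u,a]\cap[u,b]$ lets one apply the induction hypothesis at basepoint $u'$ (strictly smaller distance sum) and contradict the existence of two apices outright. In the remaining case it picks neighbors $x'\in[u,a]$ and $y'\in[u,b]$ of $u$ and applies the Cycle Condition \emph{twice}, once with respect to $x$ and once with respect to $y$, obtaining gated convex cycles $C_x\subseteq[u,x]$ and $C_y\subseteq[u,y]$ both containing the $2$-path $x'uy'$. Here the second mediangle axiom enters essentially: the intersection of the two gated cycles is gated and contains a path of length $2$, and since no proper subpath of length at least $2$ of a gated cycle is gated, $C_x=C_y=:C$. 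This single cycle now lies in $[u,x]\cap[u,y]$, its vertex $\overline{u}$ opposite to $u$ is the gate of both $x$ and $y$ in $C$, and the induction can be rerun at the gate $a'$ of $a$ in $C$: uniqueness of the $a'$-apex forces $a'=\overline{u}$, and $a'=b'=\overline{u}$ yields $C\subseteq[u,a]\cap[u,b]$, contradicting $[u,a]\cap[u,b]=\{u\}$. Note that your route never uses the Intersection-of-Convex-Cycles axiom at all, whereas the paper's proof needs it precisely to identify $C_x$ with $C_y$; this is the ``full strength of the mediangle axioms'' you anticipated but did not supply, and without an analogue of it your approach, as set up, does not close.
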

\begin{proof} Pick any base-point $u$ and any two vertices $x,y$.  To prove that the $u$-apex relative to $x$ and $y$ is unique, we proceed by induction on  $d(u,x)+d(u,y)$. Suppose by way of contradiction that there exist  two distinct $u$-apices $a$ and $b$ relative to $x$ and $y$.  Then $a,b\ne u$. First, suppose that  $[u,a]\cap [u,b]\ne \{u\}$. Then there exists a neighbor $u'$ of $u$ such that $u'\in [u,a]\cap [u,b]$. Then $a,b\in [u',x]\cap [u',y]$. Since $d(u',x)+d(u',y)<d(u,x)+d(u,y)$, by the induction assumption, $x$ and $y$ admit a unique $u'$-apex, denote it by $c$. Since $a,b\in [u',x]\cap [u',y]$, by the definition of the apex, we have $a,b\in [u',c]$. Since $u'\in [u,x]\cap [u,y]$ and $c\in [u',x]\cap [u',y]$, by Lemma  \ref{l:between} we get that $c$ is  a $u$-apex with respect to $x$ and $y$. This contradicts the assumption that $a$ and $b$ are $u$-apices of $x$ and $y$. Consequently,  $[u,a]\cap [u,b]=\{u\}$.

Let $x'$ be a neighbor of $u$ in $[u,a]\subseteq [u,x]$ and $y'$ be the neighbor of $u$ in $[u,b]\subseteq [u,y]$. Since  $[u,a]\cap [u,b]=\{u\}$,  $x'$ and $y'$ are different and $x'\notin [u,b], y'\notin [u,a]$. Since $G$ is bipartite mediangle, by Theorem \ref{convex-cycles->gated} of \cite{Ge_mediangle}, there exists a gated cycle $C_x\subseteq [u,x]$ containing the vertices $u,x',y'$. Analogously, there exists a gated cycle $C_y\subseteq [u,y]$ containing the vertices $u,x',y'$. The intersection of $C_x$ and $C_y$ contains the 2-path $x'uy'$. Since the intersection of two gated sets is gated, $C_x\cap C_y$ must be gated.  Since no proper subpath of length at least 2 of a gated cycle is gated, we deduce that $C_x=C_y$. Denote this gated cycle by $C$ and let $\ou$ be the vertex of $C$ opposite to $u$. Denote also by $a'$ and $b'$ the gates of the vertices $a$ and $b$ in $C$. By the definition of bipartite mediangle graphs and since $C=C_x=C_y$, $\ou$ belongs to both intervals $[u,x]$ and $[u,y]$. Since $\ou$ is the antipode of $u$ in $C$ and $C$ is gated, this implies that $\ou$ is the gate of $x$  and $y$ in $C$. Since $a,b\in [u,x]\cap [u,y]$ and $a'\in [u,a], b'\in [u,b]$,  by Lemma  \ref{l:between} we get $a',b'\in [u,x]\cap [u,y]$. 

If $a'=\ou=b'$, then  $C\subseteq [u,a]\cap [u,b]$, contrary to the assumption that  $[u,a]\cap [u,b]=\{u\}$. So, suppose that $a'\ne \ou$. Since $y'\notin [u,a]$,  necessarily $a'$ belongs to the subpath of $C$ between $u$ and $\ou$  passing via $x'$. Since $x'\in [u,a]$, $a'$ is different from $u$. Therefore the distance sum $d(a',x)+d(a',y)$ is smaller than $d(u,x)+d(u,y)$. By the induction hypothesis, there exists a unique $a'$-apex $t$ relative to $x$ and $y$. Since $a,\ou\in [a',x]\cap [a',y]$ (that $\ou\in [a',x]\cap [a',y]$ follows from the conclusion that $\ou$ is the gate of $x$ and $y$ in $C$), necessarily $a,\ou\in [a',t]$. Since $a'\in [u,x]\cap [u,y]$, Lemma \ref{l:between} implies that $t\in [u,x]\cap [u,y]$ and $a\in [t,u]$. 
Since $a$ is an $u$-apex relative to $x$ and $y$, all this implies that $a=t$. Since $t\in [x,\ou]$ and $\ou$ is the gate of $x$ in $C$, $\ou$ is also the gate of $t=a$ in $C$. Hence $a'=\ou$, contrary to our assumption that $a'\ne \ou$. This concludes the proof. 
\end{proof}

\begin{lemma}\label{l:apiculate->COM}
If $G$ is an apiculate partial cube, then $G$ is the tope graph of a COM.
\end{lemma}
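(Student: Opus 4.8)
The plan is to invoke the characterization of tope graphs of COMs due to Knauer and Marc (Theorem~\ref{cells=gated+antipodal}): a partial cube is the tope graph of a COM if and only if each of its antipodal subgraphs is gated. Since $G$ is assumed to be a partial cube, it therefore suffices to show that every antipodal subgraph $H=G[A]$ of $G$ is gated. So I would fix such an $A$ and an external vertex $u\notin A$ and try to produce the gate of $u$ in $A$.

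First I would unwind the definition of antipodality into usable coordinates. Writing $A=[v,\ov]$ for the antipode $\ov$ of some (equivalently any) $v\in A$, the set $A$ is an interval and hence convex. Moreover, since $G$ is isometrically embedded in a hypercube, distances in $A$ agree with the ambient distances, so $A$ is exactly the set of topes of $G$ that agree with $v$ on every coordinate outside $\Sep(v,\ov)$; the coordinates in $\Sep(v,\ov)$ are precisely the $\Theta$-classes crossing $A$, and the antipodal map $\sigma\colon w\mapsto\overline{w}$ is the involution of $A$ flipping exactly these classes. A short verification shows that $\sigma$ reverses the base-point order $\preceq_u$ on $A$.

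Next I would isolate the candidate gate. Because $G$ is apiculate, $(V,\preceq_u)$ is a meet-semilattice, and as $A$ is finite the meet $g:=\bigwedge_u A$ exists in $V$ and satisfies $g\preceq_u w$, i.e.\ $g\in[u,w]$, for every $w\in A$. Thus, as soon as one knows $g\in A$, the vertex $g$ lies on a shortest $(u,w)$-path for all $w\in A$ and is therefore the gate, so that $A$ is gated. Everything is then reduced to the single assertion $g\in A$, equivalently to the statement that $(A,\preceq_u)$ has a unique minimal element, i.e.\ $|\Imp(u,A)|=1$.

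The hard part, and the step I expect to be the main obstacle, is precisely proving $g\in A$; this is where antipodality is indispensable, since apiculateness alone does not force arbitrary convex subgraphs to be gated (a non-convex hexagon inside the $3$-cube already shows how the ambient interval can overshoot a would-be antipodal subgraph). Assuming $g\notin A$, there is a class $E_f$ not crossing $A$ that separates $g$ from all of $A$, and maximality of the meet forces the neighbour $g\oplus f$ to lie outside $G$. To reach a contradiction I would exploit the order-reversing involution $\sigma$: it carries the minimal elements of $(A,\preceq_u)$ to its maximal elements, and, combined with the meets guaranteed by apiculateness, it should let one manufacture a lower bound of $A$ lying strictly $\preceq_u$-above $g$, contradicting $g=\bigwedge_u A$. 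This forces $g\in A$, hence the gate exists and $A$ is gated, which by Theorem~\ref{cells=gated+antipodal} completes the proof.
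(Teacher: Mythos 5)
Your overall strategy is the same as the paper's: both proofs reduce the statement via Theorem~\ref{cells=gated+antipodal} to showing that every antipodal subgraph of an apiculate partial cube is gated, and your preliminary observations are sound (antipodal subgraphs are intervals, hence convex and finite; the antipodal involution reverses $\preceq_u$ on $A$; if the meet $g=\bigwedge_u A$ lies in $A$ it is the gate; if $g\notin A$ then some $\Theta$-class not crossing $A$ separates $g$ from $A$ and the neighbour of $g$ across it cannot be a vertex of $G$). However, the proof has a genuine gap exactly where you flag ``the main obstacle'': the claim that the order-reversing involution $\sigma$, ``combined with the meets guaranteed by apiculateness,'' lets one manufacture a lower bound of $A$ strictly $\preceq_u$-above $g$ is never carried out, and it is not clear it can be along the lines you sketch. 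Your plan stays entirely inside the single basepoint order $\preceq_u$, but the tools available there do not obviously produce the desired vertex: for instance, the $u$-meet of two $\preceq_u$-maximal elements of $A$ (the $\sigma$-images of two imprints) need not lie in $A$ or above $g$ --- convexity of $A$ controls intervals $[\,\cdot\,,\cdot\,]$ between its vertices, not $u$-meets, which can fall far below $A$ (already in a convex $6$-cycle the $u$-meet of two vertices typically lies outside the interval between them). So the contradiction you need is asserted, not derived.

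The missing idea, and the heart of the paper's proof, is a \emph{change of basepoint}: apiculateness holds for every basepoint, and it is applied at a vertex $x$ of the antipodal subgraph $C$ itself, not at the external vertex $u$. Concretely, the paper first runs a connectivity argument on the sets $L_a=\{y\in C: [u,y]\cap \Imp(u,C)=\{a\}\}$ to produce a single vertex $x\in C$ with two distinct imprints $a,b\in \Imp(u,C)\cap[u,x]$; then it takes the $x$-apex $c$ of $u$ and the antipode $\ox$ of $x$ in $C$. Antipodality enters through $C=[x,\ox]$: since $a,b\in[x,u]\cap[x,\ox]$, the apex property gives $a,b\in[x,c]$, convexity gives $c\in C$, and Lemma~\ref{l:between} gives $c\in[u,a]\cap[u,b]$, forcing $a=c=b$, a contradiction. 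Note that this route never needs your global meet $g$ or the involution $\sigma$; conversely, your configuration ($g\subsetneq$ its upper bound, with the neighbour of $g$ towards $A$ missing from $G$) is exactly the situation where working only at basepoint $u$ stalls. To repair your write-up, replace the final paragraph by the two steps above: the imprint-connectivity argument locating $x$, and the apex computation in $G_{\preceq_x}$.
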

\begin{proof} We will use the characterization of COMs provided by Theorem \ref{cells=gated+antipodal} of\cite{KnMa}. Note that in partial cubes intervals are convex. Hence, antipodal subgraphs are convex. So, let $G$ be an apiculate partial cube and let $C$ be an antipodal subgraph of $G$. Suppose by way of contradiction that $C$ is not gated. Then there exists a vertex $u$ such that $|\Imp_C(u)|>1$. For a vertex $a\in \Imp_C(u)$, let $L_a=\{ y\in C: [u,y]\cap \Imp_C(u)=\{ a\}\}$. Each set $L_a$ is nonempty (because $a$ belongs to $L_a$), connected (since $x\in L_a$ and $x'\in [a,x]\subseteq C$ implies $x'\in L_a$), but is different from $C$ (since $|\Imp_C(u)|>1$). Therefore, there exists a vertex $x\in C\setminus L_a$ and $y\in L_a$ such that $x\sim y$. 
Since $x\notin L_a$, there exists a vertex $b\ne a$ such that $b\in \Imp_C(u)\cap [x,u]$. Since $y\in L_a$, we get $b\notin [y,u]$. Since the graph $G$ is bipartite,  $|d(u,x)-d(u,y)|=1$. Since $x\sim y$, all this implies that $d(u,x)=d(u,y)+1$ and thus $a\in [x,u]$. Consequently, in $C$ there exists a vertex $x$ such that $\Imp_C(u)\cap [u,x]$ contains two distinct vertices $a$ and $b$. 

Let $\ox$ denote the antipode of $x$ in $C$. Since $C=[x,\ox]$, we have $a,b\in [x,\ox]\cap [x,u]$. Consider the apex semilattice $G_{\preceq_x}$. Let $c$ be the $x$-apex with respect to $u$ and $\ox$. Since $a,b\in [x,\ox]\cap [x,u]$, we get $a,b\in [x,c]$.  Lemma \ref{l:between} implies that $c\in [u,a]\cap [u,b]$. Since $c\in [x,\ox], x,\ox\in C$, and $C$ is convex, we deduce that $c\in C$. Consequently, $c\in [u,a]\cap [u,b]\cap C$ contrary to the assumptions that $a,b\in \Imp_C(u)$ and $a\ne b$. This contradiction proves that $|\Imp_C(u)|=1$ for any vertex $u\in V\setminus C$ and thus that $C$ is gated. 
\end{proof}

Combining Lemmas \ref{mediangle->apiculate} and \ref{l:apiculate->COM} with Theorem \ref{finitaryCOMs-contractible}, we get the proof of Theorem \ref{t:mediangle->COM}. Notice that, thanks to Theorem~\ref{cells=gated+antipodal}, the bipartite case of Theorem~\ref{thm:GeneralCase} follows, namely:

\begin{corollary}\label{cor:BipartiteCase}
A bipartite mediangle graph $G$ is the $1$-skeleton of a contractible cell  complex $X(G)$. Moreover, the $1$-skeleta of cells in $X(G)$ coincide with the convex antipodal subgraphs of $G$. 
\end{corollary}

\subsection{Proof of Theorem \ref{thm:MediangleGatedAntipod} (bipartite case)}\label{section:ProofBipartiteCell}
The main goal of this subsection is to prove Theorem \ref{thm:MediangleGatedAntipod} in the bipartite case: 

\begin{theorem} \label{t:cells-in-mediangle}  
For a partial cube $G$, the following conditions are equivalent:
    \begin{itemize}
        \item[(i)] $G$ is mediangle and antipodal;
        \item[(ii)] $G$ is apiculate and antipodal;
        \item[(iii)] $G$ is the tope graph of a simplical OM.
    \end{itemize}
\end{theorem}

%The equivalence (ii)$\Leftrightarrow$(iii) was conjectured by the third author in \cite[Conjecture 10]{Knauer_HDR}. 

Recall from Section~\ref{section:Graphs} that, for a graph $G=(V,E)$ and a basepoint $u\in V$, we denote by $G_{\preceq_u}$ the poset on $V$ where $x\preceq_u y$ if and only if 
$x\in [u,y]$. A graph $G$ is \emph{apiculate} \cite{BaCh_wma1} if $G_{\preceq_u}$ is a meet semilattice for any $u$. Note that the additional assumption of antipodality corresponds to $G_{\preceq_u}$ being a lattice for every choice of basepoint $u$.

%\begin{proof}[Proof of Theorem~\ref{t:cells-in-mediangle}.]
We continue with the proof of Theorem \ref{t:cells-in-mediangle}.  The implication (i)$\Rightarrow$(ii) follows from Lemma \ref{mediangle->apiculate}. 
Now, we prove (ii)$\Rightarrow$(iii). Let $G$ be apiculate and antipodal. By Lemma \ref{l:apiculate->COM} and the results of \cite{BaChKn,KnMa}, $G$ is the tope graph of an OM $\cM$. Since $G$ is apiculate, $G_{\preceq_u}$ is a meet semilattice for any basepoint $u$. Since $G$ is antipodal, $V=[u,\ou]$, the meet semilattice $G_{\preceq_u}$ has a maximal element $\ou$.  Thus $G_{\preceq_u}$ is a lattice. By \cite[Exercice 4.13]{BjLVStWhZi} (which generalizes a result of \cite{BjEdZi} for hyperplane arrangements), if $G_{\preceq_u}$  is a lattice, then the tope $u$ is simplicial. This shows that  $\cM$ is a simplicial OM, establishing (ii)$\Rightarrow$(iii). 

To prove (iii)$\Rightarrow$(i) we use Lemma 4.4.4 of \cite{BjLVStWhZi} in the particular case of simplicial OMs.  By this lemma, if $\cM$ is a simplicial OM with tope graph $G$, then for any topes $B,A,A',R$ of $\cM$ such that $A,A'\in [B,R]$ and $A,A'$ are adjacent to $B$, there exists a tope $T\in [B,R]$ such that the interval $[B,T]$ is elementary and contains $A,A'$. Recall that an interval $[T,T']$ of the tope graph $G=G(\covectors)$ is \emph{elementary} \cite[p.182]{BjLVStWhZi} if the open interval $[T,T']\setminus \{T,T'\}$ consists of two disjoint paths. Therefore the subgraph of $G$ induced by an elementary interval $[T,T']$ is a cycle. Since $[T,T']=\Star(X)$ for some $X\in \covectors$ of corank $r-2$ (see \cite[p.182]{BjLVStWhZi}) and $\Star(X)$ is gated, and thus convex, 
from Lemma 4.4.4 of \cite{BjLVStWhZi} we conclude that the tope graphs of simplicial OMs are bipartite mediangle. 
This establishes that (iii)$\Rightarrow$(i) and concludes the proof of  Theorem \ref{t:cells-in-mediangle}. 
%\end{proof}

\section{General mediangle graphs}

In this section, we complete the proof of Theorem~\ref{thm:GeneralCase}, initiated by Theorem~\ref{t:mediangle->COM}. In Subsection \ref{section:TransverseHyp}  we recall some results from \cite{Ge_mediangle} about transverse hyperplanes, the angle between transverse hyperplanes, and 
the interaction between hyperplanes and (gate-)projections on gated subgraphs  in mediangle graphs.  In Subsection~\ref{section:RightHyp}, we focus on \emph{right hyperplanes} in mediangle graphs. Roughly speaking, they will allow us to reduce the geometry of arbitrary mediangle graphs to the geometry of bipartite mediangle graphs. Then, by endowing an arbitrary mediangle graph with the structure of a cell complex whose cells are given by its gated-antipodal subgraphs, as studied in Section~\ref{section:GatedAnti}, we will be able to deduce Theorem~\ref{thm:GeneralCase} from Theorem~\ref{t:mediangle->COM} in Section~\ref{section:ProofGeneral}. 

\subsection{Hyperplanes and gated sets}\label{section:TransverseHyp} 
%\KK{some of the following should eb moved to the section 4}
Given a mediangle graph $G$, the \emph{neighbourhood} $N(J)$ of a hyperplane $J$ is the subgraph of $G$ induced by $J$. A \emph{fiber delimited by $J$} is a connected component $B$ of $N(J) \backslash J$. Two hyperplanes $J_1$ and $J_2$ are \emph{transverse} if there exists an even convex cycle with two distinct pairs of opposite edges in $J_1$ and $J_2$. See Figure~\ref{fig:MediangleHyp} for an illustration.
As indicated by their names, in mediangle graphs, it is possible to define an angle by two hyperplanes whenever they are transverse.

\begin{figure}[ht]
    \centering
\includegraphics[width=0.6\linewidth]{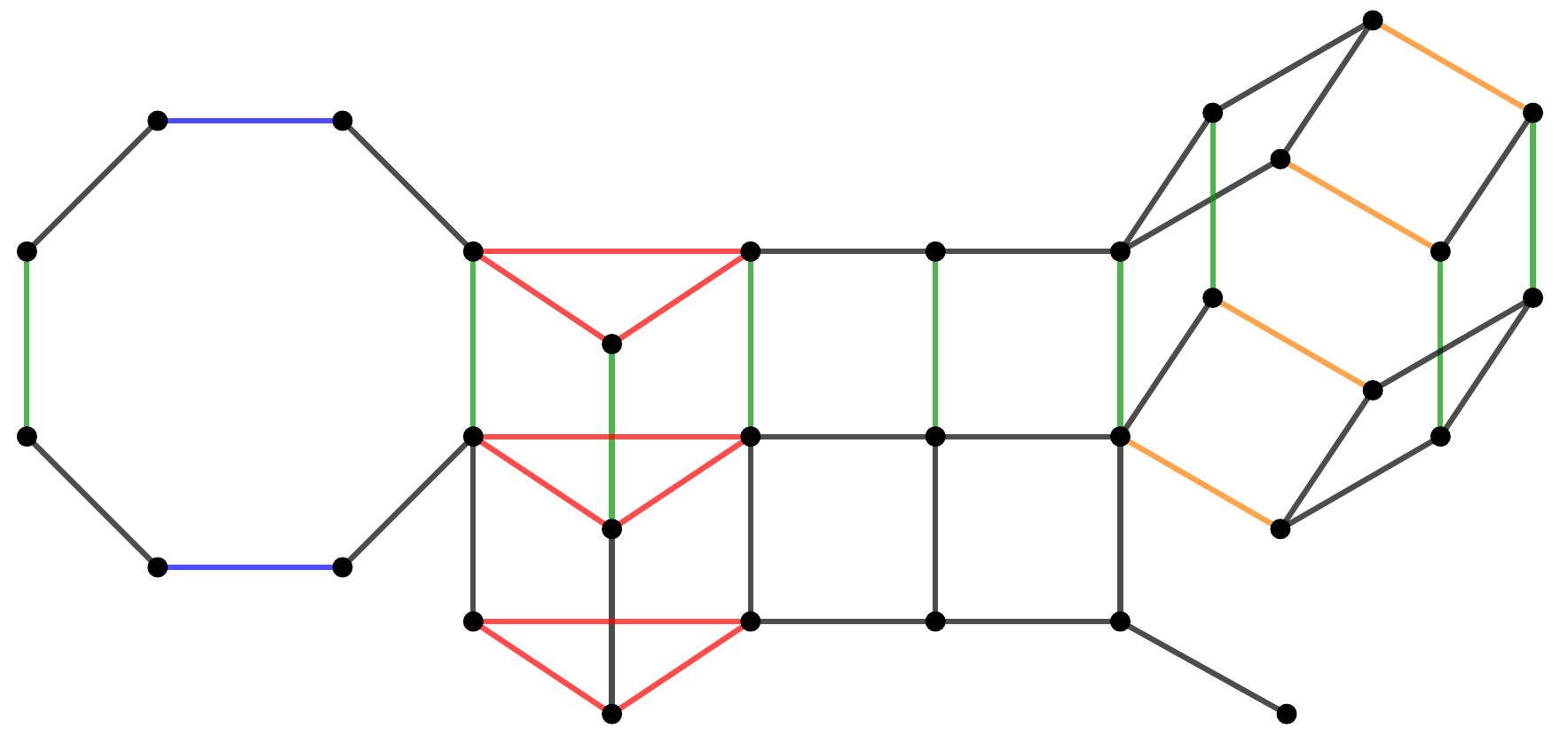}    
    \caption{If $J$ is the red hyperplane, then $N(J)$ is the disjoint union of its three fibers, each of which is a path of length two. If $J$ is the orange hyperplane, then $N(J)$ is the disjoint union of its four fibers, each of which is a path of length one. The green and the red hyperplanes are transverse, but the red and the orange are not.}
    \label{fig:MediangleHyp}
\end{figure}

\begin{definition}
In a mediangle graph, if two transverse hyperplanes $J_1$ and $J_2$ both cross an even convex cycle $C$, the \emph{angle between $J_1$ and $J_2$ at $C$} is
$$\measuredangle_C(J_1,J_2):= 2 \pi \cdot \frac{1+ d(J_1 \cap C, J_2 \cap C)}{\mathrm{length(C)}}.$$ 
\end{definition}

This number coincides with the geometric angle between the straight lines connecting the midpoints of the two pairs of opposite edges in $J_1$ and $J_2$ when $C$ is thought of as a regular Euclidean polygon. The key observation is that this angle does depend on the cycle $C$ under consideration \cite[Proposition~3.20]{Ge_mediangle}, allowing us to define the \emph{angle} $\measuredangle(J_1,J_2)$ as the angle at an arbitrary even convex cycle crossed by our two hyperplanes.

% \KK{remove:} One interesting consequence of the hyperplane structure of mediangle graphs is:

% \begin{theorem}[\hspace{-.5pt}{\cite[Theorem~1.5]{Ge_mediangle}}]\label{mediangle->partial-cubes} 
% Mediangle graphs are partial Hamming graphs, i.e.\ they isometrically embed into products of complete graphs. As a consequence, bipartite mediangle graphs are partial cubes. 
% \end{theorem}

Now, we continue with some properties of gated sets in mediangle graphs and some observations regarding the interaction between hyperplanes and gates on gated subgraphs. We start with the following characterization of gated subgraphs, which parallels a similar characterization of gated sets in weakly modular graphs (graphs satisfying the Triangle and Quadrangle Conditions):

\begin{lemma}[{\hspace{1sp}\cite[Proposition~6.5]{Ge_rotation}}]\label{lem:WhenGated}
A subgraph $H$ of a mediangle graph $G$ is gated if and only if $H$ is connected and satisfies the following two conditions:
\begin{itemize}
	\item every triangle with an edge in $H$ is entirely contained in $H$;
	\item every convex cycle with two consecutive edges in $H$ is entirely contained in $H$.
\end{itemize}
\end{lemma}

%\KK{can this be done in a single Lemma statement?}

\begin{lemma}[{\hspace{1sp}\cite[Corollary~3.18]{Ge_rotation}}]\label{lem:HypSepProjs}
Let $G$ be a mediangle graph, $H$ a gated subgraph of $G$ and  $x,y$ two vertices of $G$. Then the 
hyperplanes separating $\gate_H(x)$ and $\gate_H(y)$ coincide with the hyperplanes that cross $H$ and separate $x$ and $y$. In particular, a hyperplane separating a vertex $x$ from its gate $\gate_H(x)$ is disjoint from $H$. 
\end{lemma}

%\begin{lemma}\label{lem:HypSepFromProj}
%A hyperplane separating a vertex from its gate is disjoint from the gated subgraph.
%\end{lemma}

Given two gated sets $A,B$, two vertices $a\in A$ and $b\in B$ are called \emph{mutual gates} if $a=\gate_A(b)$ and $b=\gate_B(a)$. Denote by $\gate_A(B)$ and $\gate_B(A)$ the sets of mutual gates of $B$ in $A$ and of $A$ in $B$, respectively. It is well-known \cite{DrSch} that for any graph $G$, the sets $\gate_A(B)$ and $\gate_B(A)$ 
induce two isomorphic gated subgraphs of $G$ and this isomorphism is defined by the pairs of mutual gates. Since the cliques in mediangle graphs are gated, applying this result and the previous lemma, we obtain: 
%As a straightforward consequence of Lemma~\ref{lem:HypSepProjs}:

\begin{lemma}\label{lem:BijCliques}
Let $G$ be a mediangle graph and $K_1,K_2$ two cliques of $G$. Then either $\gate_{K_1}(K_2)$ and $\gate_{K_2}(K_1)$ are single vertices, or $\gate_{K_1}(K_2)=K_1$ and $\gate_{K_2}(K_1)=K_2$ and the mutual gate map induces a bijection between $K_1$ and $K_2$. Moreover, the latter case happens precisely when $K_1$ and $K_2$ belong to the same hyperplane. \qed
\end{lemma}

We conclude this subsection with the following characterization of sectors of hyperplanes via the gate map:

\begin{lemma}[{\hspace{1sp}\cite[Claim~3.15]{Ge_mediangle}}]\label{lem:SectorFromProj}
Let $G$ be a mediangle graph, $J$ a hyperplane of $G$, and $K$ a clique in $J$. Then the sectors delimited by $J$ are  the sets of the form  $\{ v\in V: \gate_K(v)=z\}$ for $z \in K.$ 
\end{lemma}

\subsection{Right hyperplanes}\label{section:RightHyp}

Median graphs coincide with mediangle graphs in which the angles between transverse hyperplanes are all $\pi/2$. This motivates the following definition, which introduces a family of hyperplanes that behave ``like in median graphs''.

\begin{definition}
Let $G$ be a mediangle graph. A hyperplane $J$ is \emph{right} if $\measuredangle(J,H) = \pi/2$ for every hyperplane $H$ transverse to $J$. 
\end{definition}

In other words, a right hyperplane is a hyperplane that crosses only even convex cycles of length $4$. As shown by our next lemma, right hyperplanes include all hyperplanes containing cliques with $\geq 3$ vertices.

\begin{lemma}[{\hspace{1sp}\cite[Lemma~2.24]{Ge_rotation}}]\label{lem:ThickRight}
Let $G$ be a mediangle graph and $K$ a clique. If $K$ contains at least three vertices, then the hyperplane containing $K$ is right.
\end{lemma}

A major difference between mediangle graphs and median graphs is that, in mediangle graphs, sectors may not be gated. Following our intuition that right hyperplanes behave like hyperplanes in median graphs, as a main result of this subsection, we will show:

\begin{proposition}\label{prop:RightHypGated}
Let $G$ be a mediangle graph and $J$ a right hyperplane. Sectors and fibers delimited by $J$ are gated. Moreover, given a clique $K$ contained in $J$ and a fiber $B$ delimited by $J$, the map
$$\left\{ \begin{array}{ccc} N(J) & \to & K \times B \\ x & \mapsto & (\gate_K(x), \gate_B(x)) \end{array} \right.$$
defines a graph-isomorphism. 
\end{proposition}

Proposition~\ref{prop:RightHypGated} will play a central role in the proof of Theorem~\ref{thm:GeneralCase} and the main goal of this subsection is to prove this proposition. Notice that we already know that:

\begin{lemma}[{\hspace{1sp}\cite[Lemma~2.26]{Ge_rotation}}]\label{lem:GatedCarrier}
In a mediangle graph $G$, right hyperplanes $J$ have gated neighborhoods $N(J)$.
\end{lemma}

We will also need the following two local properties of mediangle graphs, inspired by the $3$-cube condition satisfied by median graphs:

\begin{description}
	\item[(House Condition)] If $o,a,b,c,o'$ are vertices of $G$ such that $o\sim a,b,c, o\nsim o'$, and $o,b,c$ induce a triangle and $o,a,o',b$ induce a square,  then $\conv(o,a,b,c,o')$ is  a prism $K_3 \times K_2$. See~\Cref{fig:house}. 
    \medskip \noindent

\begin{figure}[ht]
    \centering

\includegraphics[width=0.17\linewidth]{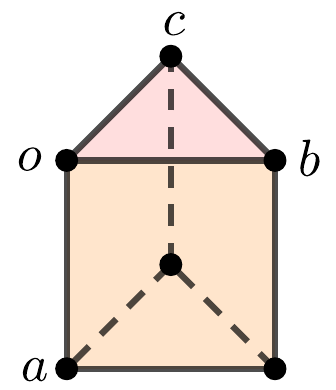}
\caption{The House Condition.}

\label{fig:house}
\end{figure}
    	\item[(Drumbell Condition)] If $o,a,b,c$ are vertices of $G$ such that $o\sim a,b,c$ and the edge $oa$ spans two squares, one with the edge $ob$ and one with the edge $oc$ and the edges $ob$ and $oc$ span a convex cycle $C$,  then $\{o,a,b,c\}$ is contained in a prism $C \times K_2$. See~\Cref{Drumbell}.
        
\begin{figure}[h!]
\begin{center}
\includegraphics[width=0.6\linewidth]{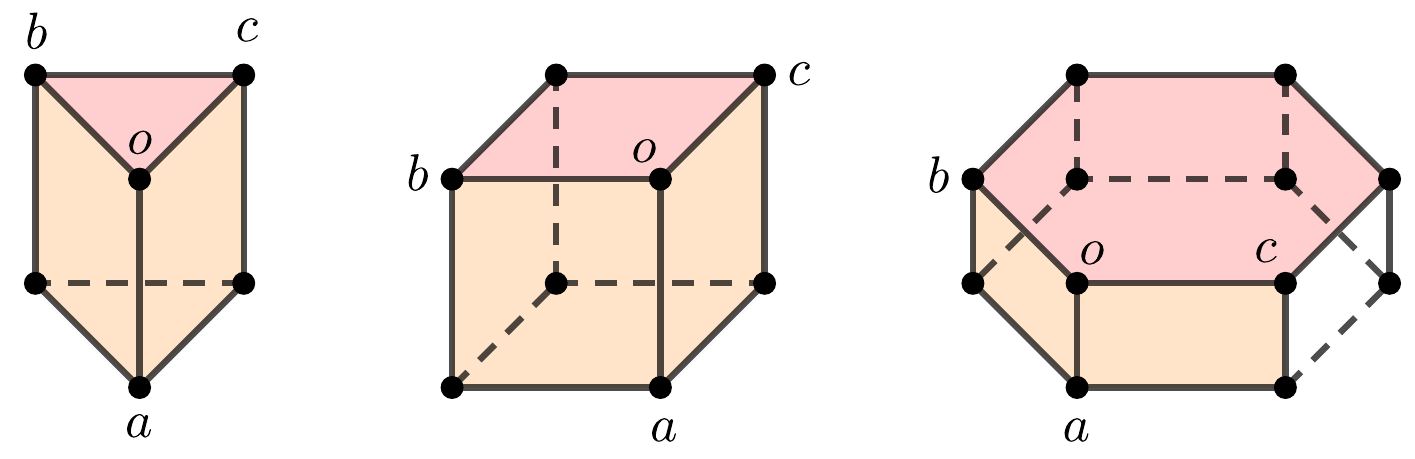}
\caption{Three examples illustrating the Drumbell Condition.}
\label{Drumbell}
\end{center}
\end{figure}
\end{description} 

\begin{lemma}\label{lem:HourseCondition}
Mediangle graphs satisfy the House Condition. 
\end{lemma}

\begin{proof}
%Let $o'$ denote the fourth vertex of the square spanned by $\{o,a\}$ and $\{o,b\}$. 
If our house subgraph $\{o,o',a,b,c\}$ is not induced, then $c$ must be adjacent to $a$ or $o'$, say $c\sim a$ (the case $c\sim o'$ is similar). Let $K$ be a clique containing the triangle $oac$. By Theorem \ref{thm:BigHyp}(4), $K$ is gated. Since $b$ is adjacent to $o,c\in K$, the vertex $b$ must belong to $K$, yielding $b\sim a$, which is impossible because $oao'b$ is a square. %up to relabeling our vertices. The vertices $c$ and $a$ cannot agree, since otherwise $a$ and $b$ would be adjacent, contradicting the fact that $\{o,a,b,o'\}$ is a square. If $c$ is adjacent to $a$, then the edges $\{o,a\}$ and $\{o,b\}$ belong to the same hyperplane. Theorem~\ref{thm:BigHyp} implies that $(a,o,b)$ is not a geodesic, hence $d(a,b) \leq 2$, which is again impossible. 
Thus, our house subgraph is induced, and consequently isometrically embedded.  Then, we can apply the Triangle Condition to $c$ and $ao'$ in order to find the missing vertex $x$ that together with $o,o',a,b,c$ define the prism $K_3 \times K_2$. This prism is obviously convex, otherwise one of its squares will be not convex, contrary to Theorem \ref{convex-cycles->gated}.  
\end{proof}

\begin{lemma}\label{lem:Drumbell}
Mediangle graphs satisfy the Drumbell Condition. 
\end{lemma}

\begin{proof}
Let $o,a,b,c$ be four vertices of a mediangle graph $G$ as in the Drumbell Condition. We distinguish three cases, depending on the length of the convex cycle $C$ (see Figure~\ref{Drumbell}).  Let $b'$ (respectively, $c'$) denote the fourth vertex of the square $C'$ spanned by $oa$ and $ob$ (respectively, of the square $C''$ spanned by $oa$ and $oc$).

First, assume that $C$ has length 3. We assert that the vertices $b'$ and $c'$ are distinct and adjacent. If $b'=c'$, then since $o\nsim b',c'$, the vertices $o,b,c,b'=c'$ will induce a diamond, which is forbidden by the Diamond Condition. Hence $b'\ne c'$. Now, the vertices $o,a,b,b',c$ span a house. By the House Condition, their convex hull $\conv(o,a,b,b',c)$ is a prism $K_3\times K_2$. Since $c'$ belongs to  $\conv(o,a,b,b',c)$ (because $c'\in [a,c]$), necessarily $c'\sim b'$ as required. 

\iffalse
need to verify that $b'$ and $c'$ are adjacent, which amounts to saying, according to Theorem~\ref{thm:BigHyp}, that there there exists exactly one hyperplane separating $b'$ and $c'$. Since $b'$ and $c'$ are connected by the path $(b',a,c')$ and since the edges $\{b',a\}$ and $\{a,c'\}$ belong to the same hyperplane, we already know that there exists at most one hyperplane separating $b'$ and $c'$. So we only need to exclude the possibility $b'=c'$. But, if $b'=c'$, then $\{a,b'\}$ and $\{a,b\}$ must belong to the same hyperplane, which implies that $d(a,b) \leq 1$, contradicting the fact that $\{o,a\}$ and $\{o,b\}$ span a square.
\fi

Next, assume that $C$ has length 4. If two of the edges $oa$, $ob$, $oc$ belong to the same hyperplane, 
then two of the vertices $a,b,c$ must be adjacent in $G$, contradicting that $C,C',C''$ are squares of $G$. 
Therefore, the edges  $oa$, $ob$, $oc$  belong to different hyperplanes and from Theorem~\ref{thm:BigHyp} we deduce that the union of $C,C',C''$ is isometrically embedded in $G$. Therefore, if $o'$ denotes the fourth vertex of $C$, then $d(a,o')=3$ 
and we can apply the Cycle Condition  to $o'$ and neighbors $b', c'\in [a,o']$ of $a$ to derive the missing vertex $x$ of our prism $C \times K_2$ containing $\{o,a,b,c\}$. From the Diamond Condition,  $x$ cannot be adjacent to any of the vertices $b,c,a$, whence this prism is an induced subgraph of $G$. 

Finally, assume that $C$ is a convex cycle of length $>4$. By Theorem \ref{convex-cycles->gated}, $C$ is gated. Since $C$ is isometrically embedded, the hyperplanes crossing $C$ must be pairwise distinct. On the other hand,  the gate of $a$ in $C$ is $o$, which implies that $d(a,o') = 1+ \mathrm{length}(C)/2$ where $o'$ is the vertex of $C$ opposite to $o$. Consequently, the hyperplane containing the edge $oa$ is not crossing $C$. On the other hand, the hyperplanes containing $ab'$ and $ac'$ are the same as the hyperplanes containing $ob$ and $oc$, and thus  are distinct. This proves that the union of $C$ and the two squares $C',C''$ is isometrically embedded. Therefore, we can apply the Cycle Condition to $o'$ and neighbors $b',c'\in [a,o']$ of $a$ and find a convex cycle $Q$ spanned by the edges $ab'$ and $ac'$. By \cite[Proposition~3.21]{Ge_rotation} the restriction of the gate map $Q \to C$ is a graph-isomorphism, which implies that the subgraph induced by $C \cup Q$ is isomorphic to $C \times K_2$. 
\end{proof}

Finally, we record a last preliminary lemma before turning to the proof of Proposition~\ref{prop:RightHypGated}. 

\begin{lemma}\label{lem:SquareCarrier}
Let $G$ be a mediangle graph, $J$ a right hyperplane, and $xy$ an edge contained in a fiber $B$ delimited by $J$. For every edge $xx'$ in $J$, there exists an edge $yy'$ in $J$ such that $\{x,x',y,y'\}$ induces a square.
\end{lemma}

\begin{proof}
%Let $K$ denote the clique containing the edge $xx'$. 
Since $y\in B$, there exists a clique $K$ of $J$ containing $y$. Since $G$ is clique-gated, let $y'$ denote the gate of $x'$ in $K$. Since $y$ is the gate of $x$ in $K$, Lemma~\ref{lem:BijCliques} implies that $y' \neq y$. Since $d(x',y) = 2$, this implies  $d(x',y')=d(y,y')=1$. Since $y$ and $y'$ are the gates of $x$ and $x'$ in $K$, we get $d(x,y') =d(x,y)+d(y,y') =2$ and $d(x',y)=d(x',y')+d(y',y)=2$. Thus, $\{x,x',y,y'\}$ defines a square, as desired. 
\end{proof}

\begin{proof}[Proof of Proposition~\ref{prop:RightHypGated}.]
Let $B$ be a fiber delimited by $J$. First, using the two conditions of Lemma~\ref{lem:WhenGated},  we show that  $B$ is gated.  Let $T$ be a triangle of $G$ with an edge $xy$ in $B$. According to Lemma~\ref{lem:SquareCarrier}, $x$ and $y$ respectively belong to edges $xx'$ and $yy'$ of $J$ that are opposite in some square $S$. By applying the House Condition (Lemma~\ref{lem:HourseCondition}) to $T \cup S$, it follows that the third vertex of $T$ also belongs to $B$, as desired. 

Let $C$ be an even convex cycle of $G$ with two consecutive edges $xy$ and $yz$ in $B$. According to Lemma~\ref{lem:SquareCarrier}, the vertices $x,y,z$  belong to edges $xx'$, $yy'$, $zz'$, respectively, such that $\{x,x',y,y'\}$ and $\{y,y',z,z'\}$ induce squares $S$ and $S'$ respectively. By applying the Drumbell Condition (Lemma~\ref{lem:Drumbell}) to $C \cup S \cup S'$, it follows that all vertices of $C$ lie in $B$, as desired. Thus, Lemma~\ref{lem:WhenGated} applies and shows that $B$ is gated.

Now, let $S$ denote the sector delimited by $B$. From the gatedness of $B$ we can deduce that $S$ is gated. Indeed, let $x$ be an arbitrary vertex of $G$. We claim that $x$ has a gate in $S$. If $x$ belongs to $S$, this is clear, so we assume that $x$ does not belong to $S$. Let $p$ denote the gate of $x$ in $B$. For every vertex $y$ of  $S$, a geodesic connecting $x$ to $y$ has to intersect $B$, hence
$$d(x,y)= d(x,q)+d(q,y) \text{ for some } q \in B.$$
But we know that $d(x,q)=d(x,p)+d(p,q)$, hence
$$d(x,y)=d(x,p)+d(p,q)+d(q,y) \geq d(x,p)+d(p,y).$$
The reverse inequality follows from triangle inequality, so  $d(x,y)=d(x,p)+d(p,y)$. Thus, $p$ is also the gate of $x$ in $S$. 

Consequently,  sectors and fibers of  right hyperplane $J$ are gated. It remains to verify that the neighborhood $N(J)$ decomposes as a product. Pick a clique $K$ in $J$ and a fiber $B$ delimited by $J$. Our goal is to verify that the map
$$\Lambda : \left\{ \begin{array}{ccc} N(J) & \to & K \times B \\ x & \mapsto & (\gate_K(x), \gate_B(x)) \end{array} \right.$$
defines a graph-isomorphism. We start by recording a couple of observations:

\begin{claim}\label{claim:CliqueInterFibre}
For all cliques $K$ in $J$ and all fibers $B$ of $J$, the intersection $K \cap B$ is non-empty. 
\end{claim}

By definition, the fibers of $J$ are the connected components of $N(J) \backslash J$. They coincide with the intersection between $N(J)$ and the connected components of $G\backslash J$, namely the sectors delimited by $J$.  This is so because the intersection between $N(J)$ and any sector is connected as an intersection of two gated subgraphs. Thus, in order to prove our claim, it suffices  to verify that every clique $K$ in $J$ intersects every sector delimited by $J$. But this follows from Lemma~\ref{lem:SectorFromProj}, concluding the proof of Claim~\ref{claim:CliqueInterFibre}.

\begin{claim}\label{claim:HypCarrier}
The set of the hyperplanes crossing $N(J)$ is $\{J\} \sqcup \{ \text{hyperplanes crossing } B\}$. 
\end{claim} 

Let $H$ be a hyperplane crossing $N(J)$. Fix an edge $xy$ of $H$ that belongs to $N(J)$. If $xy$ belongs to $J$, then there is nothing to prove. So, from now on, we assume that $xy$ does not belong to $J$. The vertex $x$ belongs to some clique $K$ of $J$, which intersects $B$ according to Claim~\ref{claim:CliqueInterFibre}. In other words, $x$ has a neighbor $x'$ that belongs to $B$. According to Lemma~\ref{lem:SquareCarrier}, $y$ has a neighbor $y'$ such that $\{x,x',y,y'\}$ defines a square. Then, $x'y'$ is an edge of $H$ that is contained in $B$, proving that $H$ crosses $B$. Conversely, it is clear that $J$ and every hyperplane crossing $B$ always crosses $N(J)$. Thus, Claim~\ref{claim:HypCarrier} is proved. 

Let $x,y$ be two vertices in $N(J)$. By Theorem~\ref{thm:BigHyp}, the distance $d(x,y)$ between $x$ and $y$ coincides with the number of hyperplanes separating them. Therefore, it follows from Claim~\ref{claim:HypCarrier} and Lemma~\ref{lem:HypSepProjs} that
$$\begin{array}{lcl} d(x,y) & = & \# \{ \text{hyperplanes separating $x$ and $y$} \} \\ \\ & = & \# \left\{ \begin{array}{c} \text{hyperplanes of $B$} \\ \text{separating $x$ and $y$} \end{array} \right\} + \# \left\{ \begin{array}{c} \text{hyperplanes of cliques $K$} \\ \text{separating $x$ and $y$} \end{array} \right\} \\ \\ & = & d \left( \gate_B(x), \gate_B(y) \right) + d \left( \gate_K(x), \gate_K(y) \right) \end{array}.$$
In other words, the map $\Lambda$ induces an isometric embedding $N(J) \to K \times B$. It remains to verify that it is also surjective on vertices. So fix a vertex $(c,p) \in K \times B$. The vertex $p$ of $B$ must belong to some clique $K'$ of $J$. We know from Lemma~\ref{lem:BijCliques} that the projection to $K$ induces a bijection $K' \to K$, so $K'$ contains some vertex $z$ whose gate in $K$ is $c$. Since $d(z,p) \leq 1$, it is clear that the gate of $z$ in $B$ is $p$. Thus, $\Lambda(z)=(c,p)$, as desired. 
\end{proof}

As a consequence of Proposition~\ref{prop:RightHypGated}, we obtain:

\begin{corollary}\label{cor:FinitelyManyRight}
Let $G$ be a mediangle graph and $R$ be a finite set of vertices. The gated hull $\lgate R\rgate$ of $R$ is crossed by only finitely many right hyperplanes.
\end{corollary}

\begin{proof}
Let $J$ be a right hyperplane. If $J$ does not separate two vertices in $R$, then $R$ is contained in a  single sector $S$ delimited by $J$. By Proposition~\ref{prop:RightHypGated} $S$ is gated, whence $\lgate R\rgate\subseteq S$. Thus, every right hyperplane crossing $\lgate R\rgate$ must separate at least two vertices of $R$. Since $R$ is finite, only finitely many right hyperplanes crossing $\lgate R\rgate$ may exist. 
\end{proof}

We emphasize that, in a mediangle graph, the gated hull of finitely many vertices may not be finite. For instance, the regular hexagonal tiling of the plane defines a mediangle graph in which the only proper gated subgraphs are the vertices, the edges, and the hexagons, which implies that generic subgraphs have infinite gated hulls.

\subsection{Proof of Theorem \ref{thm:MediangleGatedAntipod}}\label{section:GatedAnti} 
We start by defining gated-antipodal subgraphs of mediangle graphs.  % of gated-antipodal subgraphs of mediangle graphs.  First, in order to have a well-defined cellular structure obtained from gated-antipodal subgraphs, we need to show that:

%\color{red}
\begin{definition}\label{def:gatan}
A subgraph $A$ of a mediangle graph $G$ is called \emph{gated-antipodal} if, for every vertex $a$ of $A$, there exists an $a'\in A$ such that $A$ coincides with the gated hull $\lgate a,a'\rgate$ of $\{ a,a'\}$ and all the hyperplanes of $A$ separate $a$ and $a'$. %\KK{should we use $\overline{\overline{a}}$?}
\end{definition}

We continue with some properties of antipodal-gated subgraphs. 

\begin{lemma}\label{lem:InterAntipod}
Let $G$ be a mediangle graph, $H$ be a gated subgraph of $G$, and $A$ be a gated-antipodal subgraph of $G$. Then the intersection $A \cap H$ is gated-antipodal.
\end{lemma}

\begin{proof}
Pick any vertex $x$ of $A \cap H$. Since $A$ is gated-antipodal, there exists a vertex $x'$ such that $A$ coincides with the gated hull $\lgate x,x'\rgate$ of $x,x'$ and such that all the hyperplanes crossing $A$ separate $x$ and $x'$. Let $x''$ denote the gate of $x'$ in $A \cap H$.  First, we show that the hyperplanes crossing $A \cap H$ coincide with the hyperplanes separating $x$ and $x''$. Clearly, every hyperplane separating $x$ and $x''$ crosses $A \cap H$. Conversely, let $J$ be a hyperplane crossing $A \cap H$. On the one hand, since $J$ crosses $A$, from the definition of $x'$ it follows that $J$  has to separate $x$ and $x'$. Since $J$ crosses $H$, it follows from Lemma~\ref{lem:HypSepProjs} that $J$ does not separate $x'$ and $x''$. Consequently, $J$ separates $x$ and $x''$, as required.

Since $A$ and $H$ are gated and $x,x''\in A\cap H$, clearly  the gated hull $\lgate x,x''\rgate$  of $x,x''$ is contained in $A \cap H$. If the inclusion is strict, then we can find a vertex $p$ that belongs to $A \cap H$ but not to  $\lgate x,x''\rgate$. Taking a hyperplane $J$ separating $p$ from its gate in  $\lgate x,x''\rgate$, we know from Lemma~\ref{lem:HypSepProjs} that $J$ does not cross  $\lgate x,x''\rgate$. Since $J$ crosses $A\cap H$, from our previous observation it follows that $J$ has to separate $x$ and $x''$, a contradiction. 
\end{proof}

Our next result shows that  in the case of bipartite mediangle graphs gated-antipodal subgraphs are exactly the antipodal subgraphs: 

\begin{proposition}\label{prop:GatedAntVsAnt}
Let $G$ be a bipartite mediangle graph. A subgraph $A$ of $G$ is gated-antipodal if and only if it is antipodal. 
\end{proposition}

The proof of the proposition is based on the following lemma: 

\begin{lemma}\label{lem:IntGated}
Let $G$ be a bipartite mediangle graph and $p,q$ two vertices. If the interval $[p,q]$ is not gated, then in the gated hull $\lgate p,q\rgate$ of $p,q$, there is some hyperplane not separating $p$ and $q$. 
\end{lemma}

\begin{proof}
Notice that, since bipartite mediangle graphs are partial cubes, we already know that $[p,q]$ is convex. According to \cite[Proposition~6.5]{Ge_mediangle}, the fact that $[p,q]$ is not gated implies that there exists some gated cycle $C$ with two consecutive edges in $[p,q]$, say $ab$ and $ac$, but not entirely contained in $[p,q]$.

First, assume that $d(p,b)=d(p,c)=d(p,a)-1$. Then, we know from the Cycle Condition that the edges $ab$ and $ac$ span a convex cycle $Q$ such that the vertex $a'$ opposite to $a$ in $Q$ belongs to $[p,b] \cap [p,c]$. Clearly, $Q$ is contained in $[p,a] \subset [p,q]$. But, since both $Q$ and $C$  contain $ab$ and $ac$ and $C$ is gated, we must have $C=Q$, hence $C \subset [p,q]$, contrary to the choice of $C$. So this case cannot happen. Next, assume that $d(p,b)=d(p,c)=d(p,a)+1$. Since $C\subseteq [p,q]$ and $C$ is gated, we deduce that $d(q,b)=d(q,c)=d(q,a)-1$. So the same argument as before with $q$ instead of $p$ shows that this case cannot happen either.

Up to switching $b$ and $c$, it remains only one case when $|d(p,b)-d(p,c)|=2$ and $|d(q,b)-d(q,c)|=2$. Suppose without loss of generality that $d(p,b)=d(p,a)-1$ and $d(p,c)=d(p,a)+1$. Let $p'$ (respectively, $q'$) denote the gate of $p$ (respectively,  $q$) in $C$. If $p'$ and $q'$ are not opposite in $C$, then there exists a hyperplane $J$ crossing $C$ but not separating $p'$ and $q'$. Since we know, as a consequence of Lemma~\ref{lem:HypSepProjs}, that $J$ cannot separate $p$ from $p'$ nor $q$ from $q'$, it follows that $J$ does not separate $p$ and $q$. Thus, $J$ is the hyperplane we are looking for. So, from now on, we assume that $p'$ and $q'$ are opposite vertices in $C$. First suppose that $d(p,q)=d(p,p')+d(p',q')+d(q',q)$. Since $p'$ and $q'$ are opposite vertices of $C$, this implies that $C$ is contained in $[p,q]$, a contradiction. 

Finally,  suppose that $d(p,q)<d(p,p')+d(p',q')+d(q',q)$. We assert that in this case there must be a hyperplane separating $\{p',q'\}$ from $\{p,q\}$. Since $G$ is a partial cube, $d(p,q)$ is equal to the number of hyperplanes separating $p$ and $q$. 
First, pick any hyperplane $L$ separating $p$ and $p'$. Since $C$ gated and $p'$ is the gate of $p$ in $C$, the hyperplane $L$ also separates $p$ from $C$ and thus from $\{ p',q'\}$. If $L$ does not separate $p$ and $q$, then $L$ is the desired hyperplane and we are done. Therefore, we can suppose that any hyperplane $J$ separating $p$ from $p'$ (and $p$ from $\{ p',q'\}$) also separates $p$ from $q$. Analogously, we can suppose that any hyperplane $J$ separating $q$ from $q'$ (and $q$ from $\{ p',q'\}$) also separates $q$ from $p$. Finally, from all this it also follows that any hyperplane separating $p'$ and $q'$ also separates $p$ and $q$. This implies that $d(p,q)=d(p,p')+d(p',q')+d(q',q)$, a contradiction. Consequently, there must be a hyperplane $H$ separating $\{p',q'\}$ from $\{p,q\}$. Notice that, again as a consequence of Lemma~\ref{lem:HypSepProjs}, $H$ %\KK{isnt $K$ reserved for cliques?} 
does not cross $C$. Thus, $H$ separates $C$ from $\{p,q\}$, contradicting the fact that $C$ intersects $[p,q]$. This concludes the proof. 
\end{proof}

\begin{proof}[Proof of Proposition~\ref{prop:GatedAntVsAnt}.]
Let $A$ be an antipodal subgraph. We know that a given vertex $a$ of $A$ admits an opposite vertex, say $\bar{a}$, i.e., $A$ coincides with the interval $[a,\bar{a}]$. Because sectors are convex, every hyperplane of $A$ has to separate $a$ and $\bar{a}$. Then, it is clear that the gated hull $\lgate a,\bar{a}\rgate$ of $a,\bar{a}$ contains the convex hull $\conv(a,\bar{a})$ of $a,\bar{a}$, and a fortiori the interval $[a,\bar{a}]$, namely $A$. Since antipodal  subgraphs in tope graphs of COMs are gated \cite{KnMa}, it follows from Theorem~\ref{t:mediangle->COM} that $A$ is gated. Therefore, we know that, conversely, $A$ must contain the gated hull $\lgate a,\bar{a}\rgate$. We conclude that $A$ coincides with $\lgate a,\bar{a}\rgate$, thus  $A$ is gated-antipodal.

Conversely, assume that $A$ is gated-antipodal. Then  for any  $a\in A$ there exists a vertex $\bar{a}\in A$ such that $A$ coincides with the gated hull $\lgate a,\bar{a} \rgate$ and all the hyperplanes of $A$ separates $a$ and $\bar{a}$. Since  $\lgate a,\bar{a}\rgate$ contains the interval $[a,\bar{a}]$, necessarily $[a,\bar{a}] \subseteq A$. But from Lemma~\ref{lem:IntGated} we deduce that $[a,\bar{a}]$ is gated, hence $A \subset [a,\bar{a}]$. Thus, we have proved that $A=[a,\bar{a}]$, and finally that $A$ is antipodal, as desired.
\end{proof}
%\color{black}

\iffalse
\begin{proof} Since $A$ and $H$ are gated, $A\cap H$ is  gated. To prove that $A\cap H$ is gated-antipodal, 
pick any vertex $y \in V(A \cap H)$. Since $A$ is gated-antipodal, there exists $y' \in V(A)$ such that $A$ coincides with the gated-interval $[[y,y']]$ between $y$ and $y'$. Let $y'' \in V(A \cap H)$ denote the gate of $y'$ in $A \cap H$. Our goal is to show that the gated-interval $[[y,y'']]$ between $y$ and $y''$ coincides with $A \cap H$. Since $A\cap H$ is gated,  by Lemma~\ref{lem:GatedInter} $[[y,y'']]$ is included in $A \cap H$. Conversely, pick any $a \in V(A \cap H)$. To show that $a$ belongs $[[y,y'']]$ we have to show that $[y,a] \cap [a,y''] = \{a\}$ holds. Since $y''$ is the gate of $y'$ in $A\cap H$, for every vertex $z$ in $[y,a] \cap [a,y'']\subseteq A\cap H$, we have
$$d(a,y')=d(a,y'')+d(y'',y') = d(a,z)+d(z,y'') + d(y'',y') = d(a,z)+d(z,y').$$
Hence $[y,a] \cap [a,y''] \subset [y,a] \cap [a,y']$. But we already know that $[y,a] \cap [a,y']= \{a\}$  because $a$ belongs to $A$ and $A$ coincides with the gated-interval $[[y,y']]$. This concludes the proof of our lemma. 
\end{proof}
\fi

\begin{lemma}\label{l:extended}
Let $J$ be a right hyperplane of a mediangle graph $G$ and $S$ be a sector delimited by $J$. Then the extended sector  $S^+=S \cup N(J)$ is gated.
\end{lemma}

\begin{proof}
Let $x$ be a vertex not in $S^+$. We claim that $\gate_{N(J)}(x)$ is also the gate of $x$ in $S^+$. Let $y$ be an arbitrary vertex of $S^+$. If $y$ belongs to $N(J)$, then it is clear that $d(x,y)=d(x,\gate_{N(J)}(x))+d(\gate_{N(J)}(x),y)$. Otherwise, if $y$ belongs to $S$ but not to $N(J)$, then $J$ separates $x$ and $y$, so a geodesic connecting $x$ to $y$ must intersect $N(J)$, say at a vertex $z$. Then,
$d(x,y)=d(x,z)+d(z,y)=d(x,\gate_{N(J)}(x))+d(\gate_{N(J)}(x),z)+d(z,y).$
In order to conclude, it remains to verify that $d(\gate_{N(J}(x),z)+d(z,y)=d(\gate_{N(J)}(x),y)$. But, if the equality does not hold, it follows from Theorem~\ref{thm:BigHyp} that some hyperplane $J'$ separates $z$ from $\gate_{N(J)}(x)$ and from $y$.  Since $z$ lies on a geodesic between $x$ and $y$, $J'$ cannot also separate $x$ and $z$, so it has to separate $x$ and $\gate_{N(J)}(x)$. But we know from Lemma~\ref{lem:HypSepProjs} that every hyperplane separating $x$ from $\gate_{N(J)}(x)$ does not cross $N(J)$. On the other hand, $J'$ does cross $N(J)$ since it separates $\gate_{N(J)}(x)$ and $z$, yielding a contradiction.
\end{proof} 

We continue with the main result of this subsection, which shows that gated-antipodal subgraphs in mediangle graphs indeed define cells. 

\begin{lemma}\label{lem:AntipodSub}
In a mediangle graph $G$, a gated-antipodal subgraph $A$ is a Cartesian product of  finitely many cliques and an antipodal bipartite mediangle graph.
\end{lemma}

\begin{proof}
Let $G$ be a mediangle graph and $A$ be a  gated-antipodal subgraph of $G$. Since $A$ coincides with the gated hull of at least two of its vertices, Corollary~\ref{cor:FinitelyManyRight} implies that $A$ has only finitely many right hyperplanes. We prove our lemma  by induction on the number of right hyperplanes of $A$. If $A$ does not have any right hyperplane, then from Lemma~\ref{lem:ThickRight} we deduce that $A$ does not contain cliques of size at least 3. Thus $A$ is bipartite, 
and by Proposition~\ref{prop:GatedAntVsAnt} we deduce that $A$ is an antipodal  subgraph of $G$ and we are done. %so there is nothing to prove. \victor{Why in case of bipartite graphs being gate-antipodal is equivalent to be antipodal?} 
Now, we assume that $A$ contains at least one right hyperplane, say $J$. 

If $A=N(J)$, then it follows from Proposition~\ref{prop:RightHypGated} that $A= K \times B$ for a clique $K$ of $J$ and a fiber $B$ of $J$. Notice that $B$ has less right hyperplanes than $A$, since $J$ crosses $A$ but not $B$. Moreover, since $B$ coincides with the intersection of $A$ and a sector delimited by $J$, which is gated according to Proposition~\ref{prop:RightHypGated}, from Lemma~\ref{lem:InterAntipod} we deduce that $B$ is gated-antipodal. Consequently, it follows by induction that $B$, and a fortiori $A$, is a Cartesian product of finitely many cliques and an antipodal bipartite mediangle graph.

Otherwise, if $N(J)$ is a proper subset of $A$, then we can find a vertex $a \in A\setminus N(J)$. Let $p$ denote its gate in $N(J)$. Since $A$ is gated-antipodal, there must exist some vertex $p' \in A$ such that $A$ coincides with the gated hull $\lgate p,p'\rgate$ and all hyperplanes crossing $A$ separate $p$ and $p'$. Clearly, $p'$ cannot belong to the same sector $S$ delimited by $J$ as $p$, since otherwise $\lgate p,p'\rgate$, and therefore $A$, would be contained in $S$ (as $S$ is gated according to Proposition~\ref{prop:RightHypGated}). Thus, $p'$ belongs to a sector $S'$ delimited by $J$ that does not contain $p$ nor $a$. By Lemma \ref{l:extended} it follows that $N(J) \cup S'$ is a gated subgraph of $A$ containing $p, p'$ and not containing $a$. This contradicts the assumption that $\lgate p,p'\rgate=A$ and finishes the proof of the lemma.
\end{proof}

%\KK{remove or clarify:}

A \emph{tesselation of a ball} is a regular cell complex with a single maximal cell. 

\begin{corollary}\label{cor:Cell}
A gated-antipodal mediangle graph is the $1$-skeleton of a tesselation of a ball.
\end{corollary}

\begin{proof}
This follows immediately from the description provided from Lemma~\ref{lem:AntipodSub} combined with Corollary~\ref{cor:BipartiteCase}. 
\end{proof}

Finally, Theorem \ref{t:cells-in-mediangle}  follows by combining Lemma~\ref{lem:AntipodSub} and Theorem \ref{t:cells-in-mediangle}.

\subsection{Proof of Theorem~\ref{thm:GeneralCase}}\label{section:ProofGeneral}

We are finally ready to prove the main result of this paper, namely Theorem~\ref{thm:GeneralCase}.
%
%\begin{proof}[Proof of Theorem~\ref{thm:GeneralCase}.]
By Corollary~\ref{cor:Cell} and Lemma~\ref{lem:InterAntipod}, $G$ is the $1$-skeleton of a cell complex $X(G)$ obtained by filling every gated-antipodal subgraph of $G$ with a tesselation of a ball. It remains to verify that $X(G)$ is contractible. In fact, as a consequence of Whitehead Theorem, it suffices to verify that $X(G)$ is aspherical. Since (1) a continuous image of a sphere in $X(G)$ intersects only finitely many cells, (2) the gated hull of finitely many vertices is crossed by only finitely many right hyperplanes (Corollary~\ref{cor:FinitelyManyRight}), and (3) a gated-antipodal subgraph in a gated subgraph is gated-antipodal in $G$, we can assume without loss of generality that $G$ has only finitely many right hyperplanes. Any gated subgraph $H$ of a mediangle graph $G$ is mediangle, therefore the cell complex of $H$ is well-defined and will be denoted by $X(H)$. 

\begin{claim}
For every mediangle graph $G$, if $G$ has only finitely many right hyperplanes then $X(G)$ is contractible. 
\end{claim}

We argue by induction on the number of right hyperplanes of $G$. If $G$ has no right hyperplane, then it follows from Lemma~\ref{lem:ThickRight} that $G$ is bipartite, so the desired conclusion follows from Corollary~\ref{cor:BipartiteCase}. From now on, assume that $G$ contains at least one right hyperplane. Pick any such a hyperplane, say $J$.  If $\{S_i: i \in I\}$ denotes the set of sectors delimited by $J$ and if we denote by $B_i$ the fibers delimited by $J$ contained in $S_i$ for every $i \in I$, then $G$ can be described as the disjoint union $N(J) \sqcup \bigsqcup_{i \in I} S_i$ and $G$ is obtained by gluing each $S_i$ to $N(J)$ along $B_i$. Notice that, since we know from Proposition~\ref{prop:RightHypGated} that the $S_i$ and $B_i$ are gated subgraphs of $G$, the gated-antipodal subgraphs of  $S_i$ and $B_i$ are gated-antipodal subgraphs of $G$. Furthermore, by  Lemma \ref{lem:InterAntipod} every gated-antipodal subgraph $A$ of $G$ intersects each $S_i$ and $B_i$ along a gated-antipodal subgraph. Therefore, the cell complex $X(G)$ of $G$ can also be described as obtained from the disjoint union $X(N(J)) \sqcup \bigsqcup_{i \in I} X(S_i)$ by gluing along $X(B_i)$ the cell complex $X(S_i)$ of each sector $S_i$ to the cell complex $X(N(J))$ of $N(J)$.

Notice that the gated subgraphs $S_i$ and $B_i$ have fewer right hyperplanes than $G$, since they are not crossed by $J$. Since $S_i$ and $B_i$ are mediangle, it follows by induction assumption that their cell complexes $X(S_i)$ and $X(B_i)$ are contractible. Since a contractible subcomplex in a contractible complex is automatically a deformation retract (this is a basic observation of algebraic topology, see for instance \cite[Theorem 4.23, Chapter 4]{Hat} or \cite[Fact~3.12]{HomotopyRAAG} for details), we know that each $X(S_i)$ deformation retracts onto $X(B_i)$. Since the cell complexes $X(S_i)$ are pairwise disjoint, all these deformation retractions can be applied simultaneously, showing that $X(G)$ deformation retracts onto $X(N(J))$. But it follows from Proposition~\ref{prop:RightHypGated} that all  $X(B_i),i \in I$ are pairwise isomorphic and that $X(N(J))$ decomposes as a product of a (possibly infinite) simplex and an arbitrary $X(B_i)$. Since we already know that $X(B_i)$ is contractible, we conclude that $X(N(J))$ is contractible as well. Thus, we have proved that $X(G)$ is contractible, as desired. This finishes the proof of Theorem~\ref{thm:GeneralCase}.

\section{Final remarks}\label{sec:fin} 

We proved that mediangle graphs are $1$-sekeleta of contractible cell complexes, whose cells are products of simplices and simplicial oriented matroids. In the bipartite case, the mediangle graphs are tope graphs of finitary Complexes of Oriented Matroids (COMs). We wonder if this rich combinatorial description can be extended beyond the bipartite case.

\begin{question}{\sf (Beyond bipartite)}
    Is there a suitable generalization of COMs whose 1-skeleta capture  all mediangle graphs?
\end{question} 

%In fact, we believe that the general mediangle graphs are isometric subgraphs of Hamming graphs (weak Cartesian products of cliques), which can be  endowed with the cell structure of a contractible cell complex in which all cells are Cartesian products of simplicial OMs and cliques. \KK{cells of mediangle graphs? apiculate}

Towards further structural results on cell complexes of mediangle graphs, one can ask for other characterizations of cells. A relevant question in this direction is:

\begin{question} {\sf (Structure of cells)}
Is a mediangle graph gated-antipodal if and only if its hyperplanes are pairwise transverse?
\end{question}

In another direction, recall that for any two vertices $z$ and $u$ of a median graph $G$ and any set $S\subset [z,u]$ of $k$ of neighbors of $z$, there exists a $k$-dimensional cube $Q$ containing all vertices of $S$ and contained in $[z,u]$. This is called the \emph{downward cube property} and was first  established by Mulder \cite{Mu}. In \cite{CKM20}, this property was generalized to hypercellular graphs, a class of mediangle graphs~\cite{Ge_mediangle} that contains median graphs. We wonder if a similar property holds for general mediangle graphs: 

\begin{question}{\sf (The downward cell property)}
\label{t:downward-cell-property} 
Let $G$ be a mediangle graph. Is it true that for any  $z,u\in V$ and any set $S\subseteq N(z)\cap [z,u]$ there exists a cell $C$ such that $C\subseteq [u,z]$ and $N(z)\cap C=S$? 
\end{question}

For automorphism groups of median graphs and more generally hypercellular graphs and non-expansive mappings, in \cite{CKM20} we established a fixed cell property. This leads to a variant of~\cite[Question 7.2]{Ge_mediangle}:

\begin{question}  {\sf (The fixed cell property)}  If a group acts on a  mediangle graph with bounded (diameter) orbits, does it stabilize a cell? Does any non-expansive map from a  mediangle graph $G$ to itself fix a cell of $G$?
\end{question}

Our last set of questions concern the CAT(0) structure of mediangle graphs.  In the general form this is the content of the second part of Question 7.1 of \cite{Ge_mediangle}. 
The core of the question seems to lie in the bipartite case to which we restrict from now on. The main issue is that there exist non-realizable simplicial OMs, see~\cite{Gru03}. Hence not all mediangle graphs are 1-sekeleta of a zonotopal complex. Thus, it would be necessary to decide if the face lattice of a simplicial OM is realized by a Euclidean polytope. While this polytopality problem is hard for general face lattices \cite{Ric97}, it might be easier in this case. Note that since the tope graph of a COM determines the COM, it suffices to ask this question for tope graphs and not for face lattices:

\begin{question} \label{simplicialOMCAT(0)}  {\sf (Polytopality)} Let $G$ be the tope graph of a  simplicial OM. 
Does there exists a polytope $P$ in the Euclidean  space whose 1-skeleton is $G$? 
\end{question}

Even if Question \ref{simplicialOMCAT(0)} had a positive answer, it remains to glue the resulting polytopal cells by isometries to get the cell complex of a mediangle graph and then study the CAT(0) question here.
Already in the case when the resulting cell complex is well-defined and all cells are zonotopes, several questions remain open. In \cite{BaChKn}, we showed that the CAT(0) Coxeter (zonotopal) complexes introduced by Haglund and Paulin \cite{HaPau}  (see also \cite{Davis}) arise from COMs. Since the cells of Coxeter complexes are simplicial OMs, it is likley that their tope graphs are mediangle.  More generally, one may ask if the 1-sekeleta of CAT(0) complexes in which all cells are realizable simplicial OMs are mediangle: 

\begin{question} \label{CAT(0)zonotopal}  {\sf (CAT(0) zonotopal complexes and mediangle graphs)} Are the tope graphs of CAT(0) Coxeter complexes mediangle? %Are the 1-sekeleta of CAT(0) zonotopal complexes in which all cells are realizable simplicial OMs mediangle? \KK{even unclear if they are COMs, is COM all of whose cells are simplicial mediangle?}
\end{question}

%  The main motivation of \cite{BaCh_wma1} for introducing apiculate graphs was that they the graphs with a unique intrinsic algebras (a ternary algebra defined by the betweenness relation). Lemma \ref{mediangle->apiculate} shows that bipartite mediangle graphs are apiculate and Lemma \ref{l:apiculate->COM} shows that partial cubes that are apiculate are COMs. We believe that the study of intrinsic algebras of apiculate partial cubes, in particular of bipartite mediangle graphs, merits more attention. 

%  \begin{question} \label{intrinsic-algebras}  {\sf (Intrinsic algebras of apiculate partial cubes)} Characterize the intrinsic algebras of apiculate partial cubes. Are the subalgebras of such algebras also intrinsic algebras of apiculate partial cubes?
% \end{question}

\section*{Acknowledgements}
The work presented here was partially supported by the ANR project MIMETIQUE (ANR-25-CE48-4089-01). 
Kolja Knauer was supported by 
the grant PID2022-137283NB-C22 funded by MICIU/AEI/10.13039/501100011033 and by ERDF/EU and through the Severo Ochoa and María de Maeztu Program for Centers and Units of Excellence in R\&D (CEX2020-001084-M).

\bibliographystyle{abbrv}
\bibliography{refs-semigeometries}
\end{document}